\newcommand{\bmat}[1]{\begin{bmatrix}#1\end{bmatrix}}
\newcommand{\mb}[1]{\mathbb{#1}}
\newcommand{\mc}[1]{\mathcal{#1}}
\newcommand{\proj}[1]{\B{\Pi}_{#1}}
\renewcommand{\t}{^\top}
\newcommand{\B}[1]{\boldsymbol{#1}}
\newcommand{\rn}{\mathbb{R}^{n}}
\newcommand{\R}{\mathbb{R}}
\newcommand{\rmm}{\mathbb{R}^{m\times m}}
\newcommand{\rmn}{\mathbb{R}^{m\times n}}
\newcommand{\rnn}{\mathbb{R}^{n\times n}}
\newcommand{\normst}[1]{\left\|#1 \right\|_{\B{T} \rightarrow \B{S}}}
\newcommand{\Ah}{\B{\widehat{A}}}
\newcommand{\Oh}{\B{\widehat{\Omega}}}
\newcommand{\Uh}{\B{\widehat{U}}}
\newcommand{\Vh}{\B{\widehat{V}}}
\newcommand{\Sh}{\B{\widehat{\Sigma}}}
\newcommand{\Xh}{\B{\widehat{X}}}
\newcommand{\Yh}{\B{\widehat{Y}}}
\newcommand{\Zh}{\B{\widehat{Z}}}
\newcommand{\rank}{\mathsf{rank}\,}
\newcommand{\range}{\mathsf{range}\,}
\newcommand{\diag}{\mathsf{diag}}
\begin{document}

\title{{Randomized Algorithms} for Generalized Singular Value Decomposition with Application to Sensitivity Analysis} 
\author[1]{Arvind K. Saibaba*}

\author[2]{Joseph Hart}

\author[2]{Bart van Bloemen Waanders}

\authormark{SAIBABA\textsc{et al}}

\address[1]{\orgdiv{Department of Mathematics}, \orgname{North Carolina State University}, \orgaddress{\state{North Carolina}, \country{USA}}}

\address[2]{\orgdiv{Optimization and Uncertainty Quantification}, \orgname{Sandia National Laboratories}, \orgaddress{\state{New Mexico}, \country{USA}}}

\corres{*Arvind K. Saibaba, Department of Mathematics, Box 8205, Raleigh NC, 27695. \email{asaibab@ncsu.edu}}

\abstract[Summary]{The generalized singular value decomposition (GSVD) is a valuable tool that has many applications in computational science. However, computing the GSVD for large-scale problems is challenging. Motivated by applications in hyper-differential sensitivity analysis (HDSA), we propose new randomized algorithms for computing the GSVD which use randomized subspace iteration and weighted QR factorization. Detailed error analysis is given which provides insight into the accuracy of the algorithms and the choice of the algorithmic parameters. We demonstrate the performance of our algorithms on test matrices and a large-scale model problem where HDSA is used to study subsurface flow.}

\keywords{Generalized Singular Value Decomposition, Sensitivity Analysis, Randomized Algorithms}

\maketitle

\section{Introduction}\label{sec:intro}
The singular value decomposition (SVD) is perhaps the most important matrix decomposition from a theoretical and numerical perspective. Applications of the SVD includes image processing (image compression and deblurring), statistics (principal component analysis, canonical correlation analysis), model reduction (proper orthogonal decomposition, discrete empirical interpolation method), and machine learning (principal component analysis). 

In this paper, we are concerned with the generalized SVD (GSVD) motivated by an application in hyper-differential sensitivity analysis (HDSA). Two formulations for the GSVD are introduced by van Loan in~\cite{van1976generalizing}. The first formulation~\cite[Theorem 2]{van1976generalizing}, which involves a pair of matrices $\B{A}, \B{B}$, has found application prominently in inverse problems~\cite{hansen2005rank} and bioinformatics~\cite{alter2003generalized,ponnapalli2011higher}. The second formulation is the focus of this article and considers a matrix $\B{A}$ and two matrices,  $\B{S}$ and $\B{T}$, which define inner products on the range and column space of $\B{A}$, respectively. This formulation, which we refer to as the $(\B{S},\B{T})$-GSVD, arises naturally in a variety of applications including: singular value expansions of compact integral operator (such as Fredholm integral equation of the first kind)~\cite[Section 2.4]{hansen2005rank}, weighted inverse problems~\cite{van1976generalizing}, uncertainty quantification~\cite{saibaba2016randomized},  inverse problems~\cite{saibaba2015fast}, model reduction~\cite{drmac2018discrete}, and hyper-differential sensitivity analysis~\cite{hart_vbw_herzog_HDSA}. 

In many of the aforementioned applications of the $(\B{S},\B{T})$-GSVD, the matrix $\B{A}$ arises from discretizing a differential (or integral) equation while $\B{S}$ and $\B{T}$ define inner products in the discretized function space(s). These matrices are typically large as a result of the need for fine spatial meshes in two or three dimensions. The size of the resulting matrices and complexity associated with computing matrix-vector products (matvecs) mandates efficient algorithms to enable analysis on otherwise intractable problems. Hence our interest is to develop GSVD algorithms that are efficient and parallelizable. In this paper we analyze randomized algorithms for GSVD which are motivated by and applied to HDSA~\cite{hart_vbw_herzog_HDSA}, a recently proposed framework for sensitivity analysis of PDE-constrained optimization solutions, as summarized in~\cref{ssec:hdsa}.

One approach to computing the $(\B{S},\B{T})-$GSVD is to use the Cholesky (or any other appropriate factorization) of $\B{S}$ and $\B{T}$ to transform it into a standard SVD problem. However, computing the Cholesky factorization can be a significant computational bottleneck in large-scale application. A central goal in this paper is to avoid such factorizations of $\B{S}$ and $\B{T}$. Instead, our algorithms only rely on matvecs with $\B{S},\B{T}$ or their inverses. This makes our approach matrix-free and hence advantageous for large scale applications. 

\paragraph{Previous work} We briefly review the literature on algorithms for large-scale computation of the GSVD. In previous work~\cite{saibaba2016randomized}, a randomized algorithm was developed for generalized Hermitian eigenvalue problems (GHEP) with application to computing the Karhunen-Lo\'eve decomposition. The GSVD can be formulated as a GHEP in several different ways (a discussion on this can be found in \cref{ssec:ghep}) and this was used in~\cite{hart_vbw_herzog_HDSA} in the context of sensitivity analysis. In~\cite{saibaba2016randomized}, a randomized algorithm for the GSVD was presented but no analysis for this algorithm was shown. In recent work, \cite{xiang2015randomized,vatankhah2018total} a randomized algorithm GSVD algorithm for the $\B{B}$-GSVD was developed. Here too, there has been no analysis of the randomized algorithms. 

\paragraph{Contributions} This paper makes several contributions for computing large-scale GSVD using randomized algorithms. We give a summary of the main contributions of this paper, while emphasizing the novelty of our approach.

First, in~\cref{sec:algorithms}, we develop new matrix-free randomized algorithms for efficiently computing the truncated GSVD. The algorithms combine  weighted QR factorizations with randomized subspace iteration and improve upon the algorithms developed in~\cite{saibaba2016randomized}. A detailed analysis of the computational cost is provided and compared to our algorithms with related formulations based on the GHEP.

Second, in~\cref{sec:analysis}, we provide detailed probabilistic analysis of the  error in the low-rank decompositions. The analysis is new and sheds light on the choice of the algorithmic parameters so that the trade-off between computational cost and accuracy is clear. {Motivated by the error analysis, we propose a new algorithm that uses a preconditioner to lower the error in the low-rank approximation. }

Third, in \cref{sec:numex}, we show how to efficiently speed up the computations in the HDSA framework using the proposed randomized algorithms. We demonstrate the performance of our algorithms on a large scale model problem in which we seek to control injection wells to meet desired production well fluid pressure profiles, subject to uncertainty in the heterogeneous subsurface media.

\section{Background and notation} 
We begin by overviewing HDSA in~\cref{ssec:hdsa} which motivates the large-scale GSVD problem that we tackle in this paper.  We review the notation and recall fundamental facts about the SVD and GSVD in~\ref{ssec:svd} and the weighted QR decomposition in~\cref{ssec:weighted_qr}.

\subsection{Hyper-differential sensitivity analysis} \label{ssec:hdsa}
HDSA determines the sensitivity of the solution of an optimization problem with respect to fixed parameters. Of particular interest are optimization problems constrained by partial differential equations (PDEs) in the form
\begin{align} 
& \min\limits_{u,z} J(u,z) \label{general_opt} \\
& \text{s.t. } c(u,z,\theta)=0 \nonumber
\end{align}
where $c(u,z,\theta)$ represent a PDE with state $u$ and parameters $\theta$. In this case, the optimization variable $z$ may correspond to a system design or control, or an unknown parameter to be determined in an inverse problem. The parameters $\theta$ may correspond to physical parameters which are uncertain or variable, but out of necessity are fixed to a nominal value in the formulation of \eqref{general_opt}.

HSDA considers the sensitivity of the solution of \eqref{general_opt} to changes in $\theta$. To define such sensitivity, let $z_0$ denote a locally optimal solution of \eqref{general_opt} for a specified nominal estimate $\theta=\theta_0$. Under mild assumptions, see \cite{hart_vbw_herzog_HDSA}, there exists a function $z^\star(\theta)$ which maps parameters $\theta$ in a neighborhood of $\theta_0$ to local minima $z^\star(\theta)$ in a neighborhood of $z_0$. The Fr\'echet derivative of $z^\star$ with respect to $\theta$, which we denote by $\mathcal D z^\star$, is given by
\begin{eqnarray} \label{eqn:dz}
\mathcal D z^\star = \Pi \mathcal K^{-1} \mathcal B,
\end{eqnarray}
where $\Pi$ is a projection operator, $\mathcal K$ is the Karush Kuhn Tucker operator for \eqref{general_opt}, and $\mathcal B$ is the negative Fr\'echet derivative of the gradient of the Lagrangian of \eqref{general_opt} with respect to $\theta$. Computing the action of $\mathcal D z^\star$ requires a large linear system solve, applying $\mathcal K^{-1}$, and each application of the operator $\mathcal K$ requires multiple PDE solves. Hence applying $\mathcal D z^\star$ is computationally intensive and we seek to mitigate the number of operator applies.

In general, $\theta$ and $z$ are elements of infinite dimensional function spaces $\Theta$ and $Z$, respectively, which we assume to be Hilbert spaces. Upon discretization of the parameters with a finite dimensional basis $\{\theta_1,\theta_2,\dots,\theta_n \}$, we define the hyper-differential sensitivity indices
\begin{eqnarray}
\label{sen_indices}
\mathcal S_i = \frac{\vert \vert \mathcal D z^\star \theta_i \vert \vert}{\vert \vert \theta_i \vert \vert} \qquad i=1,2,\dots,n,
\end{eqnarray}
where the norms are computed in $\Theta$ and $Z$. As the dimension of the discretized parameter space $n$ may be large, computing each $S_i$ through operator applies $\mathcal D z^\star \theta_i$, $i=1,2,\dots,n$ is prohibitive. An efficient alternative is to compute the GSVD of $\mathcal D z^\star$ where the inner products are computed in $\Theta$ and $Z$. If $\mathcal D z^\star$ possess low rank structure, which is common in many applications, the indices \eqref{sen_indices} may be efficiently estimated using the leading singular values and vectors of $\mathcal D z^\star$.

In the notation of~\cref{sec:intro} and the remainder of the paper, $\B{A} \in \rmn$ corresponds to the discretization of $\mathcal D z^\star$. In the scope of HDSA, $m$ and $n$ correspond to the dimension of the discretization of $Z$ and $\Theta$, respectively, both of which are typically large (corresponding to discretizations of PDEs). The matrices $\B{S}$ (not to be confused with the sensitivity index $\mathcal S_i$)and $\B{T}$ encode the function space inner products and their dimensions are likewise dependent on discretizations of PDEs. For HDSA, $\B{A}$ will be large, dense, and only accessible through matvecs which require many PDE solves per matvec, hence our motivation for efficient and parallel algorithms. In most cases, $\B{S}$ and $\B{T}$ will be large and sparse. Matvecs with $\B{S}$ and $\B{T}$ are less computationally intensive than with $\B{A}$ (they do not require PDE solves), but factorizations of $\B{S}$ and $\B{T}$ are undesirable because of their size and the loss of sparsity that typically results.

\subsection{SVD and GSVD}  \label{ssec:svd}
Given a positive definite matrix $\B{M} \in \rnn$ and $\B{x}\in \rn$, we define the weighted inner product $\langle\B{x},\B{y}\rangle_{\B{M}}=\B{x}\t \B{My}$ and the associated vector norm \[\|\B{x}\|_{\B{M}} = \sqrt{\B{x}\t \B{M x}} = \|\B{M}^{1/2}\B{x}\|_2 = \|\B{L}_{\B{M}}\t\B{x}\|_2,\]
where $\B{M}^{1/2}$ is the matrix square root and $\B{L}_{\B{M}}$ is the (lower) Cholesky factor of $\B{M}$, i.e., $\B{M} = \B{L}_{\B{M}}\B{L}_{\B{M}}\t$. Let $\B{A} \in \rmn$ and let $\B{S} \in \rmm$ and $\B{T} \in \rnn$ be symmetric positive definite matrices. The induced matrix norm is 
\[ \normst{\B{A}} = \max_{\B{x} \neq \B{0}} \frac{\|\B{Ax}\|_{\B{S}}}{\|\B{x}\|_{\B{T}}} = \|\B{L}_{\B{S}}\t\B{AL}_{\B{T}}^{-\top} \|_2 ,\]
where $\|\cdot\|_2$ is the spectral norm and $\B{L}_{\B{S}}$ and $\B{L}_{\B{T}}$ are the (lower) Cholesky factorizations of $\B{S}$ and $\B{T}$ respectively. In the proofs, it will be convenient to use the alternative relation 
\[ \normst{\B{A}} =\| \B{S}^{1/2} \B{AT}^{-1/2}\|_2.\]
where $\B{S}^{1/2}$ and $\B{T}^{1/2}$ are the matrix square root of $\B{S}$ and $\B{T}$ respectively; however, the Cholesky formulation is preferable from a computational point of view.  We denote by $\kappa_2(\B{T}) = \|\B{T}\|_2 \|\B{T}^{-1}\|_2$, the condition number of inversion in the spectral norm. 

Following the $(\B{S},\B{T})$-GSVD formulation from~\cite{van1976generalizing}, there exist matrices $\B{U} \in \rmm$ that is $\B{S}$-orthogonal, i.e., $\B{U}\t \B{S U} = \B{I}$ and  $\B{V}$ that is $\B{T}$-orthogonal such that 
\[ \B{U}^{-1}\B{AV} = \B\Sigma,\]
where $\B\Sigma \in \rmn$ is a diagonal matrix containing the generalized singular values (in decreasing order)
\[ \sigma_1 \geq \sigma_2 \geq \cdots \geq \sigma_{\min\{m,n\}} \geq 0.\]
Alternatively,  since $\B{V}^{-1} = \B{V}\t \B{T}$, we can write 
\begin{eqnarray}
\label{eqn:gsvd}
 \B{A} = \B{U\Sigma V}\t \B{T}.
 \end{eqnarray}
For a target rank $k \leq \rank(\B{A})$, we can partition the GSVD \eqref{eqn:gsvd} as
\[ \B{A} = \bmat{\B{U}_k & \B{U}_\perp} \bmat{ \B\Sigma_k & \\ & \B\Sigma_\perp} \bmat{\B{V}_k\t \B{T} \\ \B{V}_\perp\t \B{T}},\]
where $\B{U}_k \in \R^{m\times k}$, $\B{V}_k \in \R^{n\times k}$ and $\B{\Sigma}_k \in \R^{k\times k}$. This form will be useful in the error analysis. 

By contrast, consider the standard SVD of $\B{A} = \B{W}\B{S}_{\B{A}}\B{Z}^\top$, where $\B{W} \in \rmm$ and $\B{Z}\in \rnn$ are orthogonal, whose columns respectively contain the left and right singular values, and $\B{S}_{\B{A}}\in \rmn$ is a diagonal matrix containing the singular values (in decreasing order) 
\[s_1 \geq s_2 \geq \cdots \geq s_{\min\{m,n\}} \geq 0. \]

Letting $s_j(\B{A})$ and $\sigma_j(\B{A})$ denote the $j^{th}$ singular value and generalized singular value of $\B{A}$, respectively. We observe that the generalized singular values of $\B{A}$ equal the singular values of $\B{L}_{\B{S}}\t\B{A}\B{L}_{\B{T}}^{-\top}$, i.e., 
\[ \sigma_j(\B{A}) = s_j(\B{L}_{\B{S}}\t\B{A}\B{L}_{\B{T}}^{-\top}), \qquad j=1,\dots,\min\{m,n\}.\]
The generalized left singular vectors of $\B{A}$ can be obtained as $\B{U} = \B{L}_{\B{S}}^{-\top}\B{W}$ and the generalized right singular vectors can be obtained as $\B{V} = \B{L}_{\B{T}}^{-\top}\B{Z}$. It is easy to verify that $\B{U}$ and $\B{V}$ are $\B{S}$- and $\B{T}$-orthogonal, respectively. The details are in \cite[Theorem 3]{van1976generalizing}. Furthermore, the multiplicative singular value inequalities \cite[Equation (7.3.14)]{horn1990matrix} imply
\[  \frac{s_j(\B{A})}{\sqrt{\|\B{S}^{-1}\|_2\|\B{T}\|_2}} \leq \sigma_j(\B{A}) \leq \sqrt{\|\B{S}\|_2\|\B{T}^{-1}\|_2} s_j(\B{A}) \qquad j=1,\dots,\min\{m,n\}.\]
This bound shows that the generalized singular values and the singular values may be substantially different, if $\B{S} \neq \B{I}_m$ and/or $\B{T}\neq \B{I}_n$.

\subsection{QR in a weighted inner product} \label{ssec:weighted_qr}
 An important algorithmic component for the randomized algorithm is an efficient way of computing the thin QR factorization in the weighted inner product. We review the `PreCholQR' algorithm described in~\cite[Algorithm 2]{lowery2014stability}; we call this algorithm Weighted CholQR. Given a positive definite matrix $\B{W}\in \rmm$, this algorithm produces a QR factorization of $\B{Z} \in \rmn$ with $m \geq n$ such that 
\[ \B{Z} = \B{QR} \qquad \B{Q}\t \B{W Q} = \B{I}_n,\]
and $\B{R}\in \rnn$ is an upper triangular matrix. See \cref{alg:cholqr} for the implementation details.

\LinesNumbered
\begin{algorithm}[!ht]
\DontPrintSemicolon
\caption{Weighted CholQR. }
\label{alg:cholqr}
\SetKwInput{Input}{Input}
\SetKwInput{Output}{Output}
	\Input{Matrix $\B{Z}\in \R^{m\times n}$ with $m \geq n$ and positive definite matrix $\B{W} \in \rmm$.}
	\Output{Matrices $\B{Q} \in \rmn$ and $\B{R}\in\rnn$ satisfying $\B{Z} = \B{QR}$ with $\B{Q}\t \B{WQ} = \B{I}$.}
	\BlankLine
	\tcc{Call this function as: $[\B{Q},\B{R}] = \text{CholQR}(\B{Z},\B{W})$.}
Compute thin QR factorization $\B{Q}_{\B{Z}}\B{R}_{\B{Z}}= \B{Z}$\;
Compute $\B{Q}_{\B{W}} = \B{WQ}_{\B{Z}}$
Compute the Cholesky factorization of $\B{Q}_{\B{Z}}\t\B{Q}_{\B{W}}  = \B{R}_{\B{W}}\t \B{R}_{\B{W}}$\;
Form $\B{R} = \B{R}_{\B{W}}\B{R}_{\B{Z}}$ and $\B{Q} = \B{Q}_{\B{Z}}\B{R}_{\B{W}}^{-1}$\;
\end{algorithm}
In addition to computing $\B{Q}$, we may have to also compute $\B{WQ}$; from  \cref{alg:cholqr}, we see that it equals $\B{Q}_{\B{W}}\B{R}_{\B{W}}^{-1}$. This way, we can compute $\B{WQ}$ without expending additional matvecs with $\B{W}$. The computational cost of \cref{alg:cholqr} can be summarized as: $k$ matvecs involving $\B{W}$ and an additional $\mathcal{O}(nk^2)$ floating point operations (flops). In practice, we use a modification of \cref{alg:cholqr}. This modified algorithm first computes a thin-QR factorization of $\B{Z}$ before applying \cref{alg:cholqr}. See~\cite[Section 3]{lowery2014stability} for a discussion on this algorithm. 
 
We will also need to use projection matrices in weighted inner products. Let $\B{A}^\dagger$ denote the Moore-Penrose inverse of $\B{A}$. The orthogonal projector onto the range of $\B{A}$ is denoted as $\proj{\B{A}} = \B{AA}^\dagger$. If $\B{Q} \in \R^{n\times k}$ has $\B{S}$-orthonormal columns, i.e., $\B{Q}\t \B{S Q} = \B{I}_k$, then $\proj{\B{Q}} = \B{QQ}^\dagger = \B{QQ}\t \B{S}$.

\section{Algorithms}\label{sec:algorithms}
In \cref{ssec:basicalg} we propose the basic version of the randomized algorithm for GSVD. Next, in \cref{ssec:subspace}, we propose a randomized algorithm for GSVD based on subspace iteration, with special attention to handling the weighted inner products. A discussion on the computational cost of these algorithms is presented in \cref{ssec:compcost} and we end this section with alternative ways of computing the GSVD by formulating it as a generalized hermitian eigenvalue problem (GHEP)~\cref{ssec:ghep}.

\subsection{Outline of the Basic algorithm}\label{ssec:basicalg}

We first give an informal description of the basic version of the algorithm that helps highlight the overall structure. We call this algorithm basic, since it will be a special case of the more general algorithm in \cref{ssec:subspace}. The basic algorithm is comprised of two stages:
\begin{description}
    \item[Stage 1:] determine a subspace to approximate the range of $\B{A}$ by randomized sampling,
    \item[Stage 2:] compute a low rank approximation of $\B{A}$ by projecting on to the subspace determined in Stage 1 and convert into GSVD format.
\end{description}

An optional postprocessing step truncates the decomposition to the desired target rank $k$.

To execute Stage 1, we draw a random matrix $\B\Omega \in \R^{n\times (k+p)}$, where $k$ is the target rank, $p \geq 0$ is an oversampling parameter that can be used to control the accuracy of the low-rank approximation, and $\ell \equiv k +p \leq \min\{m,n\}$. The specific choice of the distribution of $\B\Omega$ will not be discussed at this point; see the end of \cref{ssec:compcost}. The next step is to compute $\B{Y} = \B{ A\Omega}$ and a basis for its range. Specifically, we use the weighted CholQR described in \cref{alg:cholqr} with $\B{W} = \B{S}$. We then have $\B{Y} = \B{QR}$ and $\B{Q}\t \B{S Q} = \B{I}_\ell$. 

In Stage 2, we can obtain a low-rank approximation of the form 
\[ \B{A} \approx \B{QQ}\t\B{SA} = \proj{\B{Q}}\B{A}, \]
where $\proj{\B{Q}} \equiv \B{QQ}\t \B{S}$ is an $\B{S}$-orthogonal projector.
That is, to obtain a low-rank approximation, we project $\B{A}$ onto the range of $\B{Q}$. A few additional steps are then used to convert this low-rank approximation into the GSVD format
\[ \B{A}  \approx \proj{\B{Q}}\B{A} = \Uh \Sh \Vh\t \B{T}.\]
We now present a method for improved accuracy by using randomized subspace iteration in Stage 1. We can view the basic algorithm, outlined above, as a special case of \cref{alg:randsvd} with $q=0$ steps of the subspace iteration.

\subsection{Improved accuracy via subspace iteration}\label{ssec:subspace}

\LinesNumbered
\begin{algorithm}[!ht]
\DontPrintSemicolon
\caption{Randomized subspace iteration with weighted inner products. }
\label{alg:randsubspace}
\SetKwInput{Input}{Input}
\SetKwInput{Output}{Output}
	\Input{Matrices $\B{A} \in \R^{m\times n}$, $\B{S}\in  \rmm$ and $\B{T}\in \rnn$. Random Matrix 
	$\B\Omega \in \R^{n\times \ell}$ such that $\ell \leq \min\{m,n\}$}
	\Output{Matrices $\B{Q} \in \R^{n\times \ell}$ such that $\B{Q}\t \B{SQ} = I_{\ell}$. }
	\BlankLine
	\tcc{Call as: $[\B{Q}]$ = RandSubspace($\B{A},\B{S},\B{T},\B\Omega,q$).}
Compute $\B{Y}=\B{A\Omega}$.\;
Compute $[\B{Q},\B{R}] = \text{CholQR}(\B{Y},\B{S})$. \;
\For {$j = 1,\dots,q$} {
Update sketch $\B{Y} = \B{A}\t \B{S Q}$. \;
Compute $[\B{Q},\B{R}] = \text{CholQR}(\B{Y},\B{T}^{-1})$. \;
Update sketch $\B{Y} = \B{A}\B{T}^{-1}\B{Q}$. \;
Compute $[\B{Q},\B{R}] = \text{CholQR}(\B{Y},\B{S})$. \;
}\end{algorithm}

{Stage 1 of the basic algorithm may be executed with Lines 1-2 in \cref{alg:randsvd}. Subspace iteration augments them with Lines 3-8 (in \cref{alg:randsvd}) to attain a better projector via the ``sketch'' 
\[ \B{Y} = ( \B{A} \B{T}^{-1}\B{A}\t\B{S})^q\B{A\Omega} ,\]
where $q \geq 0$ is the number of subspace iterations. The rationale behind this sketch is clear if we plug in the generalized SVD $\B{A} = \B{U\Sigma V}\t \B{T}$ to obtain }

\begin{equation}\label{eqn:subspace} \B{Y} = \B{U} (\B\Sigma \B\Sigma\t )^q \B\Sigma\B{V}\t \B{T\Omega}.\end{equation}
Analytical error bounds and numerical evidence shows that employing subspace iterations improves upon the accuracy of the basic algorithm in \cref{ssec:basicalg}. A naive implementation of the subspace iteration as in~\cref{eqn:subspace},  can have poor behavior in the presence of round-off error. This can be addressed by alternating QR factorizations with products involving $\B{A}$ and $\B{A}\t$. We present a version of the subspace iterations in \cref{alg:randsubspace} that accounts for weighted inner products. The randomized subspace iteration produces a matrix $\B{Q} \in \R^{m\times\ell}$ with $\B{S}$-orthonormal columns, which approximates $\range(\B{A})$ and $\B{A} \approx \B{QQ}\t \B{SA}$. This low-rank approximation can be converted into an approximate $(\B{S},\B{T})$-GSVD format by employing Stage 2 of \cref{alg:randsvd}.

\LinesNumbered
\begin{algorithm}[!ht]
\DontPrintSemicolon
\caption{Randomized generalized SVD. }
\label{alg:randsvd}
\SetKwInput{Input}{Input}
\SetKwInput{Output}{Output}
	\Input{Matrix $\B{A} \in \R^{m\times n}$. Target rank $k\leq \rank(\B{A})$, oversampling parameter $p \geq 0$, such that $\ell = k + p \leq \min\{m,n\}$.}
	\Output{Matrices $\Uh,\Sh,\Vh $ such that $\B{A} \approx \Uh\Sh\Vh^\top\B{T}$.}
	\BlankLine
	\tcp{\underline{Stage 1}: Range finder.}
	Draw random matrix $\B\Omega \in \R^{n\times \ell}$. \;
	 Apply subspace iteration $[\B{Q}]$ = RandSubspace($\B{A},\B{S},\B{T},\B\Omega,q$).\;
	\tcp{\underline{Stage 2}: Computing a low-rank factorization.}
Compute $\B{B} = \B{A}\t \B{S Q}$. \;
	Compute the QR factorization $[\B{Q}_{\B{B}},\B{R}_{\B{B}}] = \text{CholQR}(\B{T}^{-1}\B{B}\t,\B{T})$.\;
	Compute the SVD of $\B{R}_B\t = \B{U}_{\B{B}}\Sh \B{V}_{\B{B}}\t$.\;
 	Compute $\Uh = \B{QU}_{\B{B}}$ and $\Vh = \B{Q}_{\B{B}}\B{V}_{\B{B}}$.\;
 	Truncation step (optional): $\Uh = \Uh(:,1:k)$, $\Sh = \Sh(1:k,1:k)$ and $\Vh = \Vh(:,1:k)$.\;
\end{algorithm}
We will see from the analysis in \cref{sec:analysis}, that the basic version of the randomized algorithm is accurate when $\B{T}$ is well conditioned and the generalized singular values $\sigma_j$ with index $j >  k$ are sufficiently small. The number of subspace iterations $q$ involve a trade-off between computational costs and accuracy, and depend on the decay of the generalized singular values the condition number of $\B{T}$.   
\subsection{Computational cost and alternative formulations}\label{ssec:compcost} We now summarize the computational cost of the randomized GSVD using the subspace iteration. Let $q$ denote the number of subspace iterations, and $\ell$ be the number of columns of $\B\Omega$. The cost in Stage 1 is dominated by the steps in \cref{alg:randsubspace}. From \cref{eqn:subspace}, it is clear that we perform $(q+1)\ell$ products with $\B{A}$, and $q\ell$ matvecs with $\B{A}\t$. Additionally, we have to perform $q\ell$ matvecs with $\B{S}$ and $q\ell $ matvecs with $\B{T}^{-1}$ (or solves with $\B{T}$). The weighted QR involving $\B{S}$ requires $\mc{O}(k^2m)$ flops and those involving $\B{T}$ require $\mc{O}(k^2n)$ flops. We now discuss the cost in Stage 2. Step 4 requires $\ell$ matvecs with $\B{A}\t$ and $\B{S}$ respectively; Step 5 requires $\ell$ matvecs with $\B{T}^{-1}$ and $\B{T}$ and an additional cost of $\mc{O}(nk^2)$ flops. Steps 6 and 7 require $\mc{O}(k^2(m+n))$ flops. A summary of this discussion is available in \cref{tab:compcost}. 
\begin{table}[!ht]
    \centering
    \begin{tabular}{c|c|c|c|c|c|c}
    Stage &      $\B{Ax}$ & $\B{A}\t\B{x}$& $\B{Sx}$ & $\B{Tx}$ &  $\B{T}^{-1}\B{x}$ & Other flops \\ \hline
      Range find. &    $(q+1)\ell$ & $q\ell$ & $q\ell$ &$-$ &$q\ell$ &$\mc{O}(qk^2(m+n))$  \\
      Low-rank fact. & $-$ & $\ell$ & $\ell$ & $\ell$ & $\ell$ & $\mc{O}(k ^3 + k^2(m+n))$ 
     \end{tabular}
    \caption{The computational cost of the randomized GSVD algorithm with subspace iteration. Here, $m$ and $n$ refers to the size of $\B{A}$, $k$ is the target rank, $\ell = k + p$, where $p\geq 0$ is the oversampling parameter, and $q \geq 0$ is the number of subspace iterations.}
    \label{tab:compcost}
\end{table}

\paragraph{Should one use $\B{A}$ or $\B{A}\t$?} One can, in principle, apply \cref{alg:randsvd} (appropriately modified using subspace iteration) to $\B{A}\t$ yielding the approximate $(\B{T}^{-1},\B{S}^{-1})$ GSVD 
\[\B{A}\t \approx \Xh \Sh \Yh\t \B{S}^{-1},\]
where $\Xh$ has $\B{T}^{-1}$-orthonormal columns and $\Yh$ has $\B{S}^{-1}$-orthonormal columns. The singular values contained in $\Sh$ approximate the singular values in $\B{T}^{-1/2}\B{A}\t\B{S}^{1/2}$, as desired. To obtain the $(\B{S},\B{T})$-GSVD of $\B{A}$,  we first observe
\[ \B{A} \approx \B{S}^{-1}\Yh\Sh\Xh\t.\]
Then, we make the transformation $\Uh = \B{S}^{-1}\Yh$ (which has $\B{S}$-orthonormal columns) and $\Vh = \B{T}^{-1}\Xh$ (which has $\B{T}$-orthonormal columns), giving the approximate $(\B{S},\B{T})$ GSVD $\B{A} \approx \Uh\Sh\Vh\t\B{T}$. However, it is immediately clear that we have to perform solves with $\B{S}$ (alternatively, matvecs with $\B{S}^{-1}$). If solves with $\B{S}$ is cheaper than solves with $\B{T}$ (which is required by \cref{alg:randsvd}), then it maybe preferable to use this alternative formulation. Another reason to prefer this alternative formulation is if $\B{S}$ has a lower condition number than $\B{T}$; that is, $\kappa_2(\B{S}) < \kappa_2(\B{T})$. The reason is that the error analysis in \cref{thm:prob}, shows an explicit dependence on the condition number $\kappa_2(\B{T})$; the alternative formulation would involve $\kappa_2(\B{S})$ and therefore may have higher accuracy.  

\paragraph{Choice of $\B\Omega$} In \cref{ssec:basicalg,ssec:subspace}, we left the choice of the random matrix $\B\Omega$ unspecified. We briefly comment on the possible choices. A popular choice, we adopt in this paper, is to take $\B\Omega$ to be a standard Gaussian random matrix. That is, with entries are independent and identically distributed (i.i.d) Gaussian random variables with mean $0$ and variance $1$. This choice of $\B\Omega$ ensures that the amount of oversampling needed in practice can be quite modest. For example, following the arguments in \cite[Equation (5.9)]{gu2015subspace}, we can take $p =20$. Other choices are possible, such as subsampled randomized Hadamard/Fourier transform (SRHT/SRFT), Rademacher random matrices, sparse Rademacher random matrices, etc. A discussion of these choice has been provided in \cite[Section 4.6]{halko2011finding} and \cite[Section 3.9]{tropp2017practical}. {In \cref{ssec:precond}, we propose a different approach for constructing the random matrix $\B\Omega$, that makes use of a preconditioner for $\B{T}$.}

\subsection{Computing GSVD using GHEP}\label{ssec:ghep}
There are three alternatives approaches to compute the GSVD by casting it as a GHEP.  In the first approach, we consider the GHEP
$\B{A}\t \B{S Ax} = \lambda  \B{Tx} .$ 
The second approach considers a similar GHEP, $\B{AT}^{-1}\B{Ax} = \lambda\B{S}^{-1}\B{x}$. In both approaches, the left and right generalized singular vectors can be extracted from the generalized eigenvectors but require additional matvecs with $\B{A}$ or $\B{A}\t$. The third approach considers the Jordan-Wielandt-type augmented matrix 
\begin{eqnarray}
\label{eqn:JW}
\bmat{ & \B{A} \\ \B{A}\t & }\bmat{\B{x}\\ \B{y}} = \lambda \bmat{\B{S}^{-1} \\  & \B{T}} \bmat{\B{x} \\ \B{y} }.
\end{eqnarray}

In our previous work~\cite{saibaba2016randomized}, we proposed randomized algorithms for GHEP. In principle, we can compute an $(\B{S},\B{T})$-GSVD using any of the three formulations described above. However, there are several reasons to use the proposed algorithms in this paper:
\begin{itemize}
\item
Algorithm~\ref{alg:randsvd} is mathematically simpler because it approximates the GSVD directly rather than reformulating it as a GHEP and post-processing the generalized eigenvalues/vectors.

    \item 
The first two formulations, which involve working with $\B{A}\t \B{SA}$ and $\B{AT}^{-1}\B{A}\t$,  can lead to a loss in accuracy, especially when estimating the smallest singular values; see discussion in~\cite[Section 2.1.4]{bjorck2015numerical} and~\cite[Section 3.2]{larsen1998lanczos} for more details. 

\item The cost of the randomized GHEP using the first two formulations is, roughly speaking, twice as expensive as \cref{alg:randsvd} with $q=0$ since it would require $2\ell$ matvecs with $\B{A}$ and $\B{A}\t$; however, the cost is (nearly) comparable to \cref{alg:randsvd} with $q=1$. The results presented in Section~\ref{sec:numex} show that the proposed algorithm is more accurate than the GHEP approach for a comparable computational budget.  

\item The generalized eigenvalues in the third approach come in pairs of positive and negative eigenvalues of equal magnitude. To compute the $k$ largest positive eigenvalues (needed to estimated the GSVD) with a randomized solver requires computing $2k$ eigenvalues. Additionally, while the generalized eigenvectors are orthogonal up to numerical precision, the components ($\B{x}$ and $\B{y}$ in \eqref{eqn:JW}) which determine the left and right generalized singular vectors may not be orthogonal in practice, particularly those corresponding to smaller singular values.

\end{itemize}

\section{Analysis}\label{sec:analysis}

We present error bounds that give insight into the accuracy of the randomized algorithms for computing the GSVD developed in the previous section. The strategy for the analysis is split into two different stages: {\em deterministic} or ``structural'' in which, we make minimal assumptions regarding the distribution of the random matrix $\B\Omega$ (\cref{ssec:struct}), and {\em probabilistic}, in which we specialize the results to specific distributions of $\B\Omega$ (\cref{ssec:prob}). In \cref{ssec:ghepanalysis}, we also provide analysis of the randomized algorithm for the GHEP.

\subsection{Deterministic analysis}\label{ssec:struct}
We assume that the target rank $1 \leq k \leq \mathsf{rank}(\B{A})$ and define the singular value ratio 
\[ \gamma_k \equiv \frac{\sigma_{k+1}}{\sigma_k} .\]
The {\em gap} between the singular values $\sigma_k$ and $\sigma_{k+1}$ is the inverse of the singular value ratio $\gamma_k$. In applications of interest, there may be a large singular value gap which may be exploited to accelerate the convergence of the subspace iteration. Let us turn our attention to the matrix $\B\Omega \in \R^{n\times \ell}$ and define 
\begin{equation}\label{eqn:omega}
    \Oh_1 \equiv \B{V}_k\t \B{T}\B\Omega \qquad \Oh_2 \equiv \B{V}_\perp\t \B{T\Omega}
\end{equation}  
The following is the only assumption we make on the random matrix  $\B\Omega$
\begin{equation}\label{eqn:assump} \rank(\Oh_1) = k . \end{equation}
This assumption ensures that $\Oh_1 \in \R^{k\times \ell}$ has full row-rank and therefore has a right multiplicative inverse, i.e., $\Oh_1\Oh_1^\dagger = \B{I}_k$.

We present two different types of bounds for the analysis of the randomized subspace iteration. The first bound is {\em gap-dependent} and shows explicit dependence on $\gamma_k$, the ratio of the singular values $\sigma_k$ and $\sigma_{k+1}$, whereas the second bound called {\em gap-independent} does not depend on $\gamma_k$.  We also recall some basic properties that will be needed in our analysis. Let $\B{C},\B{D} \in \R^{n\times n}$ be symmetric. The notation $\B{C} \preceq \B{D}$ means $\B{D}-\B{C}$ is positive semidefinite. Let $\B{M},\B{N}$ be two matrices with the same number of rows and let $\range(\B{N}) \subset \range(\B{M})$; then by~\cite[Proposition 8.4]{halko2011finding} $\proj{\B{N}} \preceq \proj{\B{M}}$. Furthermore, 
\begin{equation}\label{eqn:projineq}
\|\proj{\B{N}}\B{A}\|_2 \leq \|\proj{\B{M}}\B{A}\|_2 \qquad \|(\B{I}-\proj{\B{M}})\B{A}\|_2 \leq \|(\B{I}-\proj{\B{N}})\B{A}\|_2 
\end{equation}

\begin{theorem}[Gap-dependent]\label{thm:gapdep} Let $\B\Omega \in \R^{n\times \ell}$ be a standard Gaussian matrix, such that~\eqref{eqn:assump} holds and let $p \geq 0$ and $\ell \leq \min\{m,n\}$. The outputs of \cref{alg:randsubspace} satisfy the following error bound
\[ \normst{\B{A} - \B{QQ}\t \B{S A}}^2  \leq \|\B\Sigma_\perp\|_2^2 + \gamma_k^{4q}\|\B\Sigma_\perp \Oh_2\Oh_1^\dagger\|_2^2. \]
\end{theorem}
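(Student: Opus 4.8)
The plan is to reduce the weighted (GSVD) projection error to an ordinary, unweighted SVD projection error through the symmetric change of variables $\widetilde{\B{A}} = \B{S}^{1/2}\B{A}\B{T}^{-1/2}$, for which the norm identity $\normst{\B{A}} = \|\B{S}^{1/2}\B{A}\B{T}^{-1/2}\|_2$ recorded in \cref{ssec:svd} is tailor-made. Substituting the GSVD $\B{A} = \B{U}\B{\Sigma}\B{V}\t\B{T}$ gives $\widetilde{\B{A}} = (\B{S}^{1/2}\B{U})\B{\Sigma}(\B{T}^{1/2}\B{V})\t$, and since $\B{U}\t\B{S}\B{U} = \B{I}$ and $\B{V}\t\B{T}\B{V} = \B{I}$, the factors $\widetilde{\B{U}} = \B{S}^{1/2}\B{U}$ and $\widetilde{\B{V}} = \B{T}^{1/2}\B{V}$ are genuinely orthogonal. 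Thus $\widetilde{\B{A}} = \widetilde{\B{U}}\B{\Sigma}\widetilde{\B{V}}\t$ is an ordinary SVD whose singular values are exactly the generalized singular values $\sigma_j$, and the partition $\B{V} = [\B{V}_k\ \B{V}_\perp]$ transports to $\widetilde{\B{V}}_k = \B{T}^{1/2}\B{V}_k$ and $\widetilde{\B{V}}_\perp = \B{T}^{1/2}\B{V}_\perp$.

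Next I verify that every object produced by \cref{alg:randsubspace} transports cleanly. By \eqref{eqn:subspace} the output $\B{Q}$ has $\B{S}$-orthonormal columns and spans $\range(\B{Y})$ with $\B{Y} = (\B{A}\B{T}^{-1}\B{A}\t\B{S})^q\B{A}\B{\Omega}$; a direct computation using $\widetilde{\B{A}}\widetilde{\B{A}}\t = \B{S}^{1/2}\B{A}\B{T}^{-1}\B{A}\t\B{S}^{1/2}$ shows $\B{S}^{1/2}\B{Y} = (\widetilde{\B{A}}\widetilde{\B{A}}\t)^q\widetilde{\B{A}}\widetilde{\B{\Omega}}$ with $\widetilde{\B{\Omega}} = \B{T}^{1/2}\B{\Omega}$. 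Hence $\widetilde{\B{Q}} = \B{S}^{1/2}\B{Q}$ has orthonormal columns ($\widetilde{\B{Q}}\t\widetilde{\B{Q}} = \B{Q}\t\B{S}\B{Q} = \B{I}$) and spans the range of the ordinary subspace-iteration sketch of $\widetilde{\B{A}}$. The error transports exactly as well: expanding $\normst{\B{A} - \B{Q}\B{Q}\t\B{S}\B{A}} = \|\B{S}^{1/2}(\B{A} - \B{Q}\B{Q}\t\B{S}\B{A})\B{T}^{-1/2}\|_2$ and using $\B{S}^{1/2}\B{Q}\B{Q}\t\B{S}\B{A}\B{T}^{-1/2} = \widetilde{\B{Q}}\widetilde{\B{Q}}\t\widetilde{\B{A}}$ gives $\normst{\B{A}-\B{Q}\B{Q}\t\B{S}\B{A}} = \|(\B{I}-\widetilde{\B{Q}}\widetilde{\B{Q}}\t)\widetilde{\B{A}}\|_2$. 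Finally, $\widetilde{\B{V}}_k\t\widetilde{\B{\Omega}} = \B{V}_k\t\B{T}\B{\Omega} = \Oh_1$ and $\widetilde{\B{V}}_\perp\t\widetilde{\B{\Omega}} = \Oh_2$, so the hypothesis \eqref{eqn:assump} is precisely the full-row-rank condition for the ordinary problem and the right-hand side of the claimed bound is literally the quantity that appears there. (Only this rank condition, not Gaussianity, is used; the bound is deterministic.)

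It then remains to establish the deterministic spectral-norm bound for ordinary randomized subspace iteration, $\|(\B{I}-\widetilde{\B{Q}}\widetilde{\B{Q}}\t)\widetilde{\B{A}}\|_2^2 \le \|\B{\Sigma}_\perp\|_2^2 + \gamma_k^{4q}\|\B{\Sigma}_\perp\Oh_2\Oh_1^\dagger\|_2^2$. Rotating into the left-singular coordinates of $\widetilde{\B{A}}$ (spectral norm and orthogonal projectors are unitarily invariant) reduces this to bounding $\|(\B{I}-\proj{\B{G}})\B{\Sigma}\|_2$ for the block matrix $\B{G} = \bmat{\B{\Sigma}_k^{2q+1}\Oh_1 \\ \B{\Sigma}_\perp^{2q+1}\Oh_2}$. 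Since $\Oh_1\Oh_1^\dagger = \B{I}_k$, right-multiplying $\B{G}$ by $\Oh_1^\dagger\B{\Sigma}_k^{-(2q+1)}$ shows $\range\bmat{\B{I}_k \\ \B{F}} \subseteq \range(\B{G})$ with $\B{F} = \B{\Sigma}_\perp^{2q+1}\Oh_2\Oh_1^\dagger\B{\Sigma}_k^{-(2q+1)}$, so the projection monotonicity \eqref{eqn:projineq} lets me replace $\proj{\B{G}}$ by the projector onto $\range\bmat{\B{I}_k\\\B{F}}$. Splitting $\B{\Sigma}$ into its two block columns and using $\|[\B{X}\ \B{Y}]\|_2^2 \le \|\B{X}\|_2^2 + \|\B{Y}\|_2^2$, the $\B{\Sigma}_\perp$ column contributes at most $\|\B{\Sigma}_\perp\|_2^2$, while the $\B{\Sigma}_k$ column is rewritten using the fact that $\bmat{\B{I}_k\\\B{F}}$ lies in the retained subspace, giving $\|\B{F}\B{\Sigma}_k\|_2 = \|\B{\Sigma}_\perp^{2q}(\B{\Sigma}_\perp\Oh_2\Oh_1^\dagger)\B{\Sigma}_k^{-2q}\|_2 \le \|\B{\Sigma}_\perp^{2q}\|_2\,\|\B{\Sigma}_\perp\Oh_2\Oh_1^\dagger\|_2\,\|\B{\Sigma}_k^{-2q}\|_2 = \gamma_k^{2q}\|\B{\Sigma}_\perp\Oh_2\Oh_1^\dagger\|_2$.

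I expect the main obstacle to be the bookkeeping of the non-commuting weights in the second paragraph rather than the final inequalities: one must keep $\B{S}^{1/2}$ and $\B{T}^{1/2}$ on the correct sides so that the projector identity $\B{S}^{1/2}\B{Q}\B{Q}\t\B{S}\B{A}\B{T}^{-1/2} = \widetilde{\B{Q}}\widetilde{\B{Q}}\t\widetilde{\B{A}}$ and the sketch identity $\B{S}^{1/2}\B{Y} = (\widetilde{\B{A}}\widetilde{\B{A}}\t)^q\widetilde{\B{A}}\widetilde{\B{\Omega}}$ come out exactly, and one must confirm that the alternating $\B{S}$- and $\B{T}^{-1}$-weighted CholQR steps of \cref{alg:randsubspace} really produce a basis for $\range(\B{Y})$. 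The only delicate point in the last paragraph is tracking the power-iteration exponents so that the clean factor $\gamma_k^{2q} = \|\B{\Sigma}_\perp^{2q}\|_2\|\B{\Sigma}_k^{-2q}\|_2$ separates out and the sharper $\|\B{\Sigma}_\perp\Oh_2\Oh_1^\dagger\|_2$ (rather than the looser $\sigma_{k+1}\|\Oh_2\Oh_1^\dagger\|_2$) is retained.
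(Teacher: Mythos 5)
Your proposal is correct and follows essentially the same route as the paper's proof: your change of variables $\widetilde{\B{A}}=\B{S}^{1/2}\B{A}\B{T}^{-1/2}$ reproduces the paper's Step~1 (conversion to standard norms, landing on $\|(\B{I}-\proj{\B{U}\t\B{SQ}})\B\Sigma\|_2$ since $\B{U}\t\B{SQ}=\widetilde{\B{U}}\t\widetilde{\B{Q}}$), your construction of $\B{Z}=\bmat{\B{I}_k\\ \B{F}}$ via right-multiplication by $\Oh_1^\dagger\B\Sigma_k^{-(2q+1)}$ together with projection monotonicity is exactly the paper's Step~2, and your estimate $\|\B{F}\B\Sigma_k\|_2\leq\gamma_k^{2q}\|\B\Sigma_\perp\Oh_2\Oh_1^\dagger\|_2$ matches the paper's Step~3. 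The only (immaterial) difference is how the two-term split is obtained: you split $(\B{I}-\proj{\B{Z}})\B\Sigma$ by block columns and use $(\B{I}-\proj{\B{Z}})\B{Z}=\B{0}$, whereas the paper extracts the diagonal subblocks of the positive semidefinite matrix $\B\Sigma\t(\B{I}-\proj{\B{Z}})\B\Sigma$ via the explicit form of $\proj{\B{Z}}$ and \cite[Proposition 3]{halko2011finding}; both yield the identical bound.
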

\begin{proof}
 There are three main steps in this proof, which heavily relies on the proof technique of \cite[Theorem 9.1]{halko2011finding}. However, we need to pay close attention to the weighted inner products.
 
\paragraph{Step 1. Converting to standard norms} Using the property of the $\normst{\cdot}$ norms
\[ \normst{(\B{I} - \B{QQ}\t \B{S}) \B{A} } = \|\B{S}^{1/2}(\B{I} - \B{QQ}\t \B{S})\B{AT}^{-1/2}\|_{2}. \] Plugging in the generalized SVD of $\B{A}$, we have 
\[ \begin{aligned}
\B{S}^{1/2}(\B{I} - \B{QQ}\t \B{S})\B{AT}^{-1/2} = & \> \B{S}^{1/2}(\B{I} - \B{QQ}\t \B{S})\B{U\Sigma V}\t\B{T}\B{T}^{-1/2} \\
= & \> \B{S}^{1/2}(\B{U} - \B{QQ}\t \B{S U})\B{\Sigma V}\t\B{T}^{1/2}\\
= & \> \B{S}^{1/2} \B{U}(\B{I} - \B{U}\t\B{S}\B{QQ}\t \B{S U})\B{\Sigma V}\t\B{T}^{1/2} & (\text{since  } \B{UU}\t\B{S} = \B{I}) \\
= & \> \B{S}^{1/2} \B{U}(\B{I} - \proj{\B{U}\t\B{SQ}})\B{\Sigma V}\t\B{T}^{1/2}. &
\end{aligned} \]
In the last step, we have used the fact that $\B{U}\t\B{SQ}$ has orthonormal columns.  Combining the intermediate steps we have
\[ \begin{aligned}
\normst{(\B{I} - \B{QQ}\t \B{S}) \B{A} } = & \> \|\B{S}^{1/2}(\B{I} - \B{QQ}\t \B{S})\B{AT}^{-1/2}\|_{2} \\ 
= & \| \B{S}^{1/2} \B{U}(\B{I} - \proj{\B{U}\t\B{SQ}})\B{\Sigma V}\t\B{T}^{1/2} \|_2 = \|(\B{I} - \proj{\B{U}\t \B{S Q}}) \B\Sigma \|_2,
\end{aligned}\]
In the last step, we have used the fact that $\B{S}^{1/2}\B{U}$ and $\B{T}^{1/2}\B{V}$ are orthogonal matrices, and that the spectral norm is unitarily invariant.

\paragraph{Step 2. Reducing dimension from $\ell$ to $k$}
From the generalized SVD of $\B{A}$
 \[ \B{U}\t \B{S Y} = \B{U}\t\B{S} \B{U}(\B\Sigma\B\Sigma\t)^q\B\Sigma\B{V}\t \B{T}\B\Omega = \bmat{\B\Sigma_k^{2q+1} \Oh_1 \\ (\B\Sigma_\perp\B\Sigma_\perp\t)^q\B\Sigma_\perp \Oh_2}.\]
 By assumption, $\Oh_1$ has a right multiplicative inverse and define
 \[ \B{Z} \equiv \B{U}\t \B{S Y}\Oh_1^\dagger \B\Sigma_k^{-(2q+1)} = \bmat{\B{I} \\ \B{F}}, \quad \text{where} \quad\B{F} \equiv (\B\Sigma_\perp\B\Sigma_\perp\t)^q\B\Sigma_\perp \Oh_2\Oh_1^\dagger \B\Sigma_k^{-(2q+1)}.\]
 From the above equation, we have $\range(\B{Z}) \subset \range(\B{U}\t\B{SY}) \subset \range(\B{U}\t\B{SQ})$. Therefore, 
 \[ \proj{\B{Z}} \preceq \proj{\B{U}\t \B{SQ}} \qquad \B{I} - \proj{\B{U}\t \B{SQ}} \preceq \B{I} - \proj{\B{Z}}.\]
 This step has reduced the dimensionality from an $\ell = k + p$ dimensional space to a $k$ dimensional space, since the number of columns of $\B{Z}$ is $k$.
\paragraph{Step 3. Extracting diagonal subblocks}
  Using \cref{eqn:projineq} and properties of the $\|\cdot\|_2 $ norm, we can write 
\[  \normst{\B{A} - \B{QQ}\t \B{S A}}^2  = \|  (\B{I} - \proj{\B{U}\t \B{S Q}})\B\Sigma\|_2^2 \leq \|\B\Sigma\t (\B{I}-\proj{\B{Z}})\B\Sigma\|_2.\]
 Next, using the definition of the spectral projector 
\[ \proj{\B{Z}} = \bmat{\B{I} \\ \B{F}}(\B{F}\t\B{F} + \B{I})^{-1}\bmat{\B{I} & \B{F}\t}, \]
and 
\[ \B\Sigma\t (\B{I}-\proj{\B{Z}})\B\Sigma = \bmat{\B\Sigma_k\t \B{F}_1\B\Sigma_k & * \\ * & \B\Sigma_\perp\t \B{F}_2\B\Sigma_\perp} \]
where $\B{F}_1 = (\B{I}-(\B{I}+\B{F}\t\B{F})^{-1})$, $\B{F}_2 = (\B{I}-\B{F}(\B{I}+\B{F}\t\B{F})^{-1})\B{F}\t)$ and $*$ denote blocks that do not affect the calculations. Applying~\cite[Proposition 3]{halko2011finding}, we obtain 
\[  \|\B\Sigma\t (\B{I}-\proj{\B{Z}})\B\Sigma\|_2 \leq \|\B\Sigma_k\t\B{F}_1\B\Sigma_k\|_2 + \|\B\Sigma_\perp\t\B{F}_2\B\Sigma_\perp\|_2.\]
Following the proof of \cite{halko2011finding}, $\B{F}_1 \preceq \B{F}\t\B{F}$ and $\B{F}_2 \preceq \B{I}$, so that 
\begin{equation}\label{eqn:inter} \|\B\Sigma\t (\B{I}-\proj{\B{Z}})\B\Sigma\|_2 \leq  \|\B\Sigma_k\t\B{F}\t\B{F\Sigma}_k\|_2 + \|\B\Sigma_\perp\t\B\Sigma_\perp\|_2 =  \|\B{F\Sigma}_k\|_2^2 +\|\B\Sigma_\perp\|_2^2  .\end{equation}
With repeated use of the submultiplicativity inequality, we obtain
$\|\B{F\Sigma}_k\|_2 \leq \gamma_k^{2q}\|\B\Sigma_\perp \Oh_2\Oh_1^\dagger\|_2$. Therefore,
\[  \|(\B{I} - \proj{\B{U}\t \B{S Q}}) \B\Sigma \|_2^2  \leq \|\B\Sigma_\perp\|_2^2   + \gamma_k^{4q}\|\B\Sigma_\perp \Oh_2\Oh_1^\dagger\|_2^2.\] 
Combined with the result of step 1, we have the desired result.
\end{proof}

 The following bound quantifies the accuracy of the low-rank approximation and shows but does not explicitly dependend on the singular value ratio $\gamma_k$.

\begin{theorem}[Gap-independent bound]\label{thm:gapindep}
Assume the same setup as of Theorem~\ref{thm:gapdep} and let $q \geq 0$ be the number of subspace iterations. If $\B{Q}$ is the output of \cref{alg:randsubspace}, then 
\[ \normst{\B{A} - \B{QQ}\t  \B{SA} } \leq \left(1 + \|\Oh_2\Oh_1^\dagger\|_2^2 \right)^{1/(4q+2)} \sigma_{k+1}.  \]
\end{theorem}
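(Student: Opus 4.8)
The plan is to reuse the first reduction from the proof of \cref{thm:gapdep} and then replace the gap-dependent estimate of Step~3 by a ``power trick'' that trades the factor $\gamma_k^{4q}$ for a shrinking exponent on the constant. First I would invoke Step~1 of \cref{thm:gapdep} verbatim; this is where the weighted inner products are disposed of, and it yields the identity
\[ \normst{\B{A} - \B{QQ}\t\B{SA}} = \|(\B{I} - \proj{\B{U}\t\B{SQ}})\B\Sigma\|_2, \]
so that from this point on we work entirely in the standard spectral norm with the genuine orthogonal projector $\B{P} \equiv \proj{\B{U}\t\B{SQ}}$ (orthogonal because $\B{U}\t\B{SQ}$ has orthonormal columns) acting on the diagonal matrix $\B\Sigma$ of generalized singular values.

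The key idea is to introduce the exponent $2q+1$ through the power inequality for orthogonal projectors (see \cite[Proposition 8.6]{halko2011finding}): for any orthogonal projector $\B{P}$,
\[ \|(\B{I}-\B{P})\B\Sigma\|_2 \leq \|(\B{I}-\B{P})(\B\Sigma\B\Sigma\t)^q\B\Sigma\|_2^{1/(2q+1)}. \]
Since $\B\Sigma$ is diagonal, $(\B\Sigma\B\Sigma\t)^q\B\Sigma$ is again diagonal with entries $\sigma_j^{2q+1}$; I will write it $\B\Sigma^{2q+1}$ and partition it conformally as $\diag(\B\Sigma_k^{2q+1}, \B\Sigma_\perp^{2q+1})$. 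It then remains only to bound $\|(\B{I}-\B{P})\B\Sigma^{2q+1}\|_2$, and the point is that this can be done by exactly the ``$q=0$'' form of Steps~2--3 of \cref{thm:gapdep}.

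Concretely, I would reuse the matrix $\B{Z} = \bmat{\B{I}\\ \B{F}}$ from Step~2, with $\B{F} = \B\Sigma_\perp^{2q+1}\Oh_2\Oh_1^\dagger\B\Sigma_k^{-(2q+1)}$, together with the inclusion $\range(\B{Z}) \subset \range(\B{U}\t\B{SQ})$ that gives $\B{I} - \B{P} \preceq \B{I} - \proj{\B{Z}}$. Squaring and applying this semidefinite ordering yields $\|(\B{I}-\B{P})\B\Sigma^{2q+1}\|_2^2 \leq \|\B\Sigma^{2q+1}(\B{I}-\proj{\B{Z}})\B\Sigma^{2q+1}\|_2$, and the block-extraction estimate of Step~3 (using $\B{F}_1 \preceq \B{F}\t\B{F}$, $\B{F}_2 \preceq \B{I}$, and \cite[Proposition 3]{halko2011finding}) bounds the right-hand side by $\|\B{F}\B\Sigma_k^{2q+1}\|_2^2 + \|\B\Sigma_\perp^{2q+1}\|_2^2$. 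The decisive simplification, and the reason the gap disappears, is that $\B{F}\B\Sigma_k^{2q+1} = \B\Sigma_\perp^{2q+1}\Oh_2\Oh_1^\dagger$, so submultiplicativity gives $\|\B{F}\B\Sigma_k^{2q+1}\|_2 \leq \sigma_{k+1}^{2q+1}\|\Oh_2\Oh_1^\dagger\|_2$ and $\|\B\Sigma_\perp^{2q+1}\|_2 = \sigma_{k+1}^{2q+1}$. Collecting terms gives $\|(\B{I}-\B{P})\B\Sigma^{2q+1}\|_2 \leq \sigma_{k+1}^{2q+1}(1+\|\Oh_2\Oh_1^\dagger\|_2^2)^{1/2}$; raising to the power $1/(2q+1)$ as dictated by the power inequality produces the exponent $1/(2(2q+1)) = 1/(4q+2)$ on the constant and the bare factor $\sigma_{k+1}$, which is exactly the claim.

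I expect the main obstacle to be purely a matter of justifying the power inequality cleanly in this weighted setting: one must verify that after Step~1 the operator $\B{P}$ really is an orthogonal projector in the \emph{standard} inner product (so that \cite[Proposition 8.6]{halko2011finding} applies unchanged), and that the diagonal identity $(\B\Sigma\B\Sigma\t)^q\B\Sigma = \B\Sigma^{2q+1}$ together with the rectangular block partition is handled correctly. Everything downstream is a re-run of the already-established Steps~2--3 with $\B\Sigma$ replaced by $\B\Sigma^{2q+1}$, so no genuinely new estimate is required beyond the power inequality.
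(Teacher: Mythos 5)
Your proposal is correct and follows essentially the same route as the paper: reduce to the standard spectral norm via Step~1, invoke the power inequality \cite[Proposition 8.6]{halko2011finding} to pass to $(\B\Sigma\B\Sigma\t)^q\B\Sigma$, and then rerun the Steps~2--3 machinery on the powered matrix so that $\B{F}\B\Sigma_k^{2q+1}=(\B\Sigma_\perp\B\Sigma_\perp\t)^q\B\Sigma_\perp\Oh_2\Oh_1^\dagger$ kills the gap factor. The paper phrases the same argument by converting back to the weighted norm of $(\B{I}-\proj{\B{Q}})\B{B}$ with $\B{B}=(\B{AT}^{-1}\B{A}\t\B{S})^q\B{A}$ and citing the earlier proof, whereas you stay in the diagonal coordinates throughout; the two are equivalent.
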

\begin{proof}
As in the proof of Theorem~\ref{thm:gapdep}, if $\Ah = \B{S}^{1/2}\B{AT}^{-1/2}$
\[ \normst{\B{A} - \B{QQ}\t \B{SA} } = \| (\B{I}-\proj{\B{S}^{1/2}\B{Q}})\Ah\|_2.  \]
We recall~\cite[Proposition 8.6]{halko2011finding} which says if $\proj{}$ is an orthogonal projector, then for $q \geq 0$ 
\[ \|\proj{}\B{M}\|_2\leq \|\proj{} (\B{MM}^\top)^q\B{M}\|_2^{1/(2q+1)}.\]
Applying the above result with $\proj{} = \B{I}-\proj{\B{S}^{1/2}\B{Q}}$ and $\B{M} = \Ah$, we have  
\[
\begin{aligned}
\| (\B{I}-\proj{\B{S}^{1/2}\B{Q}})\Ah\|_2 \leq & \> \| (\B{I} - \proj{\B{S}^{1/2}\B{Q}})(\Ah\Ah\t)^q\Ah\|_2^{1/(2q+1)}  \\ 
= & \> \| (\B{I} - \proj{\B{S}^{1/2}\B{Q}}) \B{S}^{1/2} \B{B T}^{-1/2}\|_2^{1/(2q+1)} \\
= & \> \| \B{S}^{1/2}(\B{I} - \B{QQ}\t\B{S}) \B{BT}^{-1/2}\|_2^{1/(2q+1)}\\
= & \> \normst{ (\B{I} - \proj{\B{Q}})\B{B}}^{1/(2q+1)},
\end{aligned} \] 
where $\B{B} = (\B{AT}^{-1}\B{A}\t\B{S})^q\B{A}$. Using the GSVD of $\B{A}$, we can see that $\B{B}$ has the GSVD 
\[ \B{B} = \B{U} (\B\Sigma\B\Sigma\t)^q\B\Sigma \B{V}\t \B{T}. \] Following the steps of the proof of \cref{thm:gapdep}, we can see that 
\[ \normst{(\B{I} - \proj{\B{Q}})\B{B}} \leq \left(1 + \|\Oh_2\Oh_1^\dagger\|_2^2 \right)^{1/2}\|(\B\Sigma_\perp\B\Sigma_\perp\t)^q\B\Sigma_\perp\|_2 ,\]
where, as before, $\Oh_2 = \B{V}_\perp\t \B{T \Omega}$ and $\Oh_1= \B{V}_k\t \B{T\Omega}$. Since $\|(\B\Sigma_\perp\B\Sigma_\perp\t)^q\B\Sigma_\perp\|_2 = \sigma_{k+1}^{2q+1}$, we therefore have the inequalities 
\[ \normst{\B{A} - \B{QQ}\t \B{SA}} \leq \normst{ (\B{I} - \proj{\B{Q}})\B{B}}^{1/(2q+1)} \leq \left(1 + \|\Oh_2\Oh_1^\dagger\|_2^2 \right)^{1/(4q+2)} \sigma_{k+1}. \] 
\end{proof}

\subsection{Probabilistic analysis}\label{ssec:prob}
Thus far, we have not discussed specific choices for the distribution of the random matrix $\B\Omega \in \R^{n\times \ell}$. In this subsection, we take $\B\Omega$ to a standard Gaussian random matrix and derive probabilistic results on the accuracy of the low-rank decompositions.  We recall some useful facts about the extreme singular values of standard Gaussian random matrices.

\begin{lemma}\label{lem:gauss} Facts about Gaussian random matrices:
\begin{enumerate}
\item Let $\B{G}_1 \in \mb{R}^{m\times n}$ be a standard Gaussian random matrix. Then 
\[ \mb{P}\left\{\|\B{G}_1 \|_2 \geq \sqrt{m} + \sqrt{n} + t\right\} \leq e^{-t^2/2}.\]
    \item Let  $\B{G}_2 \in \mb{R}^{\ell \times k}$ be a standard Gaussian random matrix with $\ell - k  \geq 2$. Then 
\[\mb{P}\left\{ \|\B{G}_2^\dagger\|_2 > t\right\} \leq \sqrt{\frac{1}{2\pi(p+1)}}\left(\frac{e\sqrt{\ell}}{p+1} \right)^{p+1} t^{-(p+1)}.\]
\end{enumerate}
\end{lemma}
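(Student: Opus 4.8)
The plan is to handle the two statements separately, since they concern opposite extreme singular values of a Gaussian matrix, and to prove each by the standard two-ingredient recipe: an expectation bound followed by a concentration or density estimate. Throughout I use that a Gaussian matrix and its transpose have identical singular values, so the aspect ratio ($m \times n$ versus $n \times m$, or $\ell \times k$ versus $k \times \ell$) is immaterial, and that $\|\B{G}_2^\dagger\|_2 = 1/\sigma_{\min}(\B{G}_2)$, where $\sigma_{\min}$ denotes the smallest singular value.

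For the first bound I would first establish $\mb{E}\|\B{G}_1\|_2 \leq \sqrt{m} + \sqrt{n}$. Writing $\|\B{G}_1\|_2 = \sup_{\B{u},\B{v}} \B{u}\t \B{G}_1 \B{v}$ as the supremum of the Gaussian process indexed by pairs of unit vectors, this expectation bound follows from Gordon's comparison (Slepian-type) inequality. The second ingredient is Gaussian concentration of measure: the map $\B{G} \mapsto \|\B{G}\|_2$ is $1$-Lipschitz in the Frobenius norm, because $\big|\,\|\B{G}\|_2 - \|\B{H}\|_2\,\big| \leq \|\B{G}-\B{H}\|_2 \leq \|\B{G}-\B{H}\|_F$. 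The concentration inequality for $1$-Lipschitz functions of a standard Gaussian vector gives $\mb{P}\{\|\B{G}_1\|_2 \geq \mb{E}\|\B{G}_1\|_2 + t\} \leq e^{-t^2/2}$, and substituting the expectation bound yields the claim.

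For the second bound I would reduce the tail of $\|\B{G}_2^\dagger\|_2$ to a small-ball estimate for the least singular value, namely $\mb{P}\{\|\B{G}_2^\dagger\|_2 > t\} = \mb{P}\{\sigma_{\min}(\B{G}_2) < 1/t\}$. Since $\B{G}_2$ is tall with $\ell - k \geq 2$, the matrix $\B{G}_2\t \B{G}_2$ is almost surely invertible and Wishart-distributed, and I would invoke the exact joint density of its eigenvalues. The point is that near the origin the density of $\sigma_{\min}(\B{G}_2)$ carries a Jacobian factor of order $\sigma^{\ell-k}$, so that $\mb{P}\{\sigma_{\min}(\B{G}_2) < \epsilon\}$ scales like $\epsilon^{\ell-k+1} = \epsilon^{p+1}$ for small $\epsilon$; integrating the density against this factor and tracking the Wishart normalizing constant produces both the power $t^{-(p+1)}$ and the explicit prefactor $\frac{1}{\sqrt{2\pi(p+1)}}\big(e\sqrt{\ell}/(p+1)\big)^{p+1}$.

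The first bound is routine, as the concentration machinery is off-the-shelf and the only observations are the Lipschitz property and Gordon's expectation estimate. The main obstacle is the second bound: recovering the sharp power $p+1$ together with the precise constant requires the least-singular-value density near zero, which is considerably more delicate than the crude route through $\|\B{G}_2^\dagger\|_F^2 = \mathrm{tr}\big((\B{G}_2\t \B{G}_2)^{-1}\big)$; the latter is easier to analyze but only yields a tail governed by the Frobenius norm and hence a weaker estimate. Since these are standard facts, in the write-up I would simply cite the Gaussian analysis of \cite{halko2011finding} (in particular the bounds for $\|\B{G}\|_2$ and $\|\B{G}^\dagger\|_2$ therein) for the sharp constants.
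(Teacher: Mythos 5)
Your proposal is correct and follows essentially the same route as the paper: part 1 via the $1$-Lipschitz property of the spectral norm, the expectation bound $\mb{E}\|\B{G}_1\|_2\leq\sqrt{m}+\sqrt{n}$ (Gordon's inequality, cited from Vershynin), and Gaussian concentration; part 2 by invoking the standard tail bound for $\|\B{G}^\dagger\|_2$ from Halko, Martinsson, and Tropp, exactly as the paper does. Your sketch of the Wishart-density mechanism behind the $t^{-(p+1)}$ tail is accurate background but, like the paper, you ultimately rest the constant on the cited proposition rather than rederiving it.
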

\begin{proof} Consider the function $f(\B{G}) = \|\B{G}\|_2$ which has the Lipschitz constant $1$, since by the reverse triangle inequality $|f(\B{X})-f(\B{Y})|\leq \|\B{X}-\B{Y}\|_2 \leq \|\B{X}-\B{Y}\|_F$. By~\cite[Theorem 5.32]{vershynin2012introduction}, $\mb{E}\,f(\B{G}_1) \leq \sqrt{m} + \sqrt{n}$. Therefore, the result of part 1 follows from \cite[Proposition 10.3]{halko2011finding}. The proof of part 2 is an application of \cite[Proposition A.3]{halko2011finding}. 
\end{proof}

\cref{thm:gapdep,thm:gapindep} both identify the term $\|\Oh_2\Oh_1^\dagger\|_2$ appearing in the error bounds. \cref{lemma:inter} (see below) provides a probabilistic bound for this term when the matrix $\B\Omega$ is a standard Gaussian random matrix. Here, we provide an interpretation for this term. To simplify matters take $\B{T} = \B{I}$. Then, by \cite[Table 1]{zhu2013angles} $\|\Oh_2\Oh_1^\dagger\|_2$ is the tangent of the largest canonical angle between the subspaces spanned by the columns of (a) the random matrix $\B\Omega$, and (b) the right singular vectors $\B{V}_k$. Therefore, this term informally represents the degree of alignment between these two subspaces---in the ideal case, both the subspaces are aligned, so that this term is zero, whereas in the worst case, the two subspaces are orthogonal to each other. When $\B{T}\neq \B{I}$, this interpretation has to be modified slightly; the canonical angles are now with respect to the $\langle\cdot,\cdot\rangle_{\B{T}}$ inner products rather than the standard Euclidean inner product~\cite[Theorem 4.2]{knyazev2002principal}.

Before we state and prove \cref{lemma:inter}, define the matrices \begin{equation}\label{eqn:cov}
    \B\Gamma_1 \equiv \B{V}_k\t \B{T}^2 \B{V}_k \in \R^{k\times k}\qquad \B\Gamma_2 \equiv \B{V}_\perp\t \B{T}^2 \B{V}_\perp \in \R^{(n-k)\times (n-k)}.
\end{equation} 
and the constant
\begin{equation}\label{eqn:const}
 C_g \equiv  \frac{e\sqrt{\ell}}{p} \left(\frac{2/\delta}{\sqrt{2\pi(p+1)}}\right)^{1/(p+1)} \left({\sqrt{n-k} + \sqrt{\ell} +\sqrt{2\log\frac{2}{\delta}} } \right).
\end{equation}
Observe that $\B\Gamma_1$ and $\B\Gamma_2$ are positive definite. 

\begin{lemma}\label{lemma:inter}
Consider the notation and assumptions of Theorem~\ref{thm:gapdep}. Let the oversampling parameter satisfy $p \geq 2$ and $\ell = k+p \leq \min\{m,n\}$.
Then with probability at least $1-\delta$ 
\[\| \Oh_2\Oh_1^\dagger \|_2^2 \leq \|\B\Gamma_2\|_2 \|\B\Gamma_1^{-1}\|_2 \> C_g^2 \leq \kappa_2(\B{T}) C_g^2.\]

\end{lemma}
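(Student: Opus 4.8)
The plan is to decorrelate $\Oh_1$ and $\Oh_2$ by whitening with their covariance matrices $\B\Gamma_1$ and $\B\Gamma_2$. Since $\B\Omega$ is standard Gaussian and $\B\Gamma_1 = (\B{V}_k\t\B{T})(\B{V}_k\t\B{T})\t$, $\B\Gamma_2 = (\B{V}_\perp\t\B{T})(\B{V}_\perp\t\B{T})\t$, I would set $\B{G}_1 \equiv \B\Gamma_1^{-1/2}\Oh_1$ and $\B{G}_2 \equiv \B\Gamma_2^{-1/2}\Oh_2$ and observe that each is, marginally, a standard Gaussian matrix: the columns of $\B{G}_1$ are i.i.d.\ $N(\B{0},\B{I}_k)$ and those of $\B{G}_2$ are i.i.d.\ $N(\B{0},\B{I}_{n-k})$, because the column covariances $\B\Gamma_1^{-1/2}\B\Gamma_1\B\Gamma_1^{-1/2}$ and $\B\Gamma_2^{-1/2}\B\Gamma_2\B\Gamma_2^{-1/2}$ are identities. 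Since $\B\Gamma_1^{1/2}$ is invertible and $\B{G}_1$ has full row rank (equivalent to assumption~\eqref{eqn:assump}), the pseudoinverse factors as $\Oh_1^\dagger = \B{G}_1^\dagger\B\Gamma_1^{-1/2}$, which yields the clean identity $\Oh_2\Oh_1^\dagger = \B\Gamma_2^{1/2}\B{G}_2\B{G}_1^\dagger\B\Gamma_1^{-1/2}$.

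Submultiplicativity then separates the bound into a deterministic factor and a random factor:
\[ \|\Oh_2\Oh_1^\dagger\|_2^2 \leq \|\B\Gamma_2\|_2\,\|\B\Gamma_1^{-1}\|_2\,\|\B{G}_2\B{G}_1^\dagger\|_2^2. \]
I would bound the random factor by $\|\B{G}_2\B{G}_1^\dagger\|_2 \leq \|\B{G}_2\|_2\,\|\B{G}_1^\dagger\|_2$ and control each term using \cref{lem:gauss}. Part 1 applied to $\B{G}_2 \in \R^{(n-k)\times\ell}$ with $t = \sqrt{2\log(2/\delta)}$ gives $\|\B{G}_2\|_2 \leq \sqrt{n-k}+\sqrt{\ell}+\sqrt{2\log(2/\delta)}$ with probability at least $1-\delta/2$; part 2 applied to $\B{G}_1\t \in \R^{\ell\times k}$ (legitimate since $\ell-k=p\geq 2$) gives, after solving its tail bound at level $\delta/2$ for $t$, the estimate $\|\B{G}_1^\dagger\|_2 \leq \frac{e\sqrt{\ell}}{p+1}\big(\frac{2/\delta}{\sqrt{2\pi(p+1)}}\big)^{1/(p+1)}$ with probability at least $1-\delta/2$. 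Relaxing $p+1$ to $p$ in the leading fraction and taking a union bound yields $\|\B{G}_2\B{G}_1^\dagger\|_2 \leq C_g$ with probability at least $1-\delta$, matching the constant in~\eqref{eqn:const}.

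The main subtlety is that $\B{G}_1$ and $\B{G}_2$ are \emph{not} independent: unlike the unweighted randomized-SVD analysis, the cross-covariance $\B{V}_k\t\B{T}^2\B{V}_\perp$ need not vanish when $\B{T}\neq\B{I}$, so I cannot condition one factor on the other. The point is that independence is unnecessary here, since \cref{lem:gauss} only involves the \emph{marginal} distributions of $\B{G}_1$ and $\B{G}_2$, each of which is standard Gaussian. The crude split $\|\B{G}_2\B{G}_1^\dagger\|_2 \leq \|\B{G}_2\|_2\,\|\B{G}_1^\dagger\|_2$ combined with a union bound therefore sidesteps the correlation entirely, at the cost of a slightly looser constant than in the independent case.

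Finally, for the deterministic factor I would pass to the representation $\B{V} = \B{L}_{\B{T}}^{-\top}\B{Z}$ with $\B{Z}$ orthogonal, so that $\B{V}_k = \B{L}_{\B{T}}^{-\top}\B{Z}_k$ and $\B{V}_\perp = \B{L}_{\B{T}}^{-\top}\B{Z}_\perp$ with $\B{Z}_k,\B{Z}_\perp$ having orthonormal columns. Substituting and using $\B{T} = \B{L}_{\B{T}}\B{L}_{\B{T}}\t$ collapses the Cholesky factors, giving $\B\Gamma_1 = \B{Z}_k\t\B{L}_{\B{T}}\t\B{L}_{\B{T}}\B{Z}_k$ and $\B\Gamma_2 = \B{Z}_\perp\t\B{L}_{\B{T}}\t\B{L}_{\B{T}}\B{Z}_\perp$. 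Since $\B{Z}_\perp$ has orthonormal columns, $\|\B\Gamma_2\|_2 = \|\B{L}_{\B{T}}\B{Z}_\perp\|_2^2 \leq \|\B{L}_{\B{T}}\|_2^2 = \|\B{T}\|_2$, and since $\B{Z}_k$ has orthonormal columns, $\sigma_{\min}(\B{L}_{\B{T}}\B{Z}_k) \geq \sigma_{\min}(\B{L}_{\B{T}})$ gives $\|\B\Gamma_1^{-1}\|_2 \leq \|\B{T}^{-1}\|_2$. Multiplying these yields $\|\B\Gamma_2\|_2\,\|\B\Gamma_1^{-1}\|_2 \leq \|\B{T}\|_2\|\B{T}^{-1}\|_2 = \kappa_2(\B{T})$, which together with the random bound establishes both inequalities.
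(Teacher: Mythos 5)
Your proposal is correct and follows essentially the same route as the paper: whiten $\Oh_1$ and $\Oh_2$ by their covariances $\B\Gamma_1,\B\Gamma_2$, apply the marginal Gaussian bounds of \cref{lem:gauss} to each factor with a union bound (so the dependence between the two whitened matrices is irrelevant, exactly as you observe), and then bound $\|\B\Gamma_2\|_2\|\B\Gamma_1^{-1}\|_2$ by $\kappa_2(\B{T})$. The only divergence is in that final deterministic step, where the paper passes to the orthogonal matrix $\B{T}^{1/2}\B{V}$ and invokes the Cauchy interlacing theorem (which additionally yields the lower bound $\lambda_{k+1}(\B{T})/\lambda_k(\B{T})$ exploited in the remark after the lemma), whereas you use the Cholesky representation $\B{V}=\B{L}_{\B{T}}^{-\top}\B{Z}$ and elementary extremal singular value inequalities for products with orthonormal-column matrices; both are valid for the stated upper bound.
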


\begin{proof}
Using the submultiplicativity of the spectral norm $\| \Oh_2 \Oh_1^\dagger\|_2 \leq \|\Oh_2\|_2 \| \Oh_1^\dagger\|_2$. We deal with each term separately.

\textbf{Step 1. Bound for $\Oh_2$} First, consider 
$\Oh_2 = \B{V}_\perp\t \B{T}\B\Omega$.
Denote the columns of $\B\Omega = \bmat{\B\omega_1 & \dots & \B\omega_\ell}$. Verify that the $j$-th column of $\Oh_2$ is Gaussian with zero mean and covariance $\B\Gamma_2 \equiv \B{V}_\perp\t \B{T}^2 \B{V}_\perp$, since 
\[ \mb{E}\,[(\B{V}_\perp\t \B{T\omega}_j)((\B{V}_\perp\t \B{T\omega}_j)\t] = \B{V}_\perp\t \B{T} \mb{E}\,[\B\omega_j\B\omega_j\t] \B{TV}_\perp = \B\Gamma_2. \]
It follows that $\B\Gamma_2^{-1/2}\Oh_2 \in \mb{R}^{(n-k)\times\ell}$ is a standard Gaussian random matrix. Applying the first part of \cref{lem:gauss} to $\B\Gamma_2^{-1/2}\Oh_2$, with $ t= \sqrt{2\log\frac{2}{\delta}}$ we get that
\[ \|\Oh_2\|_2 \leq \|\B\Gamma_2^{1/2}\|_2 \|\B\Gamma_2^{-1/2}\Oh_2\|_2 \leq \sqrt{\|\B\Gamma_2\|_2} \left(\sqrt{n-k} + \sqrt{\ell} +\sqrt{2\log\frac{2}{\delta}} \right),  \]
holds with probability of  failure at most $\delta/2$. Note that $\|\B\Gamma_2^{1/2}\|_2 = \sqrt{\|\B\Gamma_2\|_2}$ because $\Gamma_2$ is symmetric positive definite. \\

\textbf{Step 2. Bound for $\Oh_1$} Now, consider $\Oh_1 \in \mb{R}^{k\times \ell }$ whose columns are independent Gaussian vectors   with zero mean and covariance $\B\Gamma_1 \equiv \B{V}_k\t \B{T}^2 \B{V}_k$. So, as before, $\B\Gamma_1^{-1/2}\Oh_1$ is a standard Gaussian random matrix. By \cref{lem:gauss}, 
\[\mb{P}\left\{ \|(\B\Gamma_1^{-1/2}\Oh_1)^\dagger\|_2 > t\right\} \leq \sqrt{\frac{1}{2\pi(p+1)}}\left(\frac{e\sqrt{\ell}}{p+1} \right)^{p+1} t^{-(p+1)}.\]
Set the right hand side to $\delta/2$ and solve for $t$ to obtain

\begin{equation}\label{eqn:oh1}
\mb{P}\left\{ \|(\B\Gamma_1^{-1/2}\Oh_1)^\dagger\|_2 > \left(\frac{2/\delta}{\sqrt{2\pi(p+1)}}\right)^{1/(p+1)}\frac{e\sqrt{\ell}}{p+1}\right\} \leq \frac{\delta}{2}.
\end{equation}
We now simplify $\|(\B\Gamma_1^{-1/2}\Oh_1)^\dagger\|_2$. By~\cite[Theorem 2.2.3]{bjorck2015numerical} if $\B{C}\in \R^{m\times r}$ and $\B{D} \in \R^{r\times n}$ and $\rank(\B{C}) = \rank(\B{D}) = r$, then
\[ (\B{CD})^\dagger = \B{D}^\dagger\B{C}^\dagger\]
Pick $\B{C} = \B\Gamma_1^{-1/2}$ and $\B{D} = \Oh_1$. The above result applies since  $\rank(\B\Gamma_1^{-1/2}) = \rank(\Oh_1) = k$. Therefore, 
\[(\B\Gamma_1^{-1/2}\Oh_1)^\dagger = \Oh_1^\dagger \B\Gamma_1^{1/2},\]
and $ \|\Oh_1^\dagger\|_2 \leq \|(\B\Gamma_1^{-1/2}\Oh_1)^\dagger\|_2 \|\B\Gamma_1^{-1/2}\|_2$.
Therefore,  combining this result with \cref{eqn:oh1}, we get with probability of failure at most $\delta/2$
\[ \|\Oh_1^\dagger\|_2\leq \sqrt{\| \B\Gamma_1^{-1}\|_2} \left(\frac{2/\delta}{\sqrt{2\pi(p+1)}}\right)^{1/(p+1)}\frac{e\sqrt{\ell}}{p+1}.\]

\textbf{Step 3. First bound} Combining the results of steps 1 and 2 and using a union bound, we have
\[ \mb{P}\left\{\|\Oh_2\|_2 \| \Oh_1^\dagger\|_2 > \sqrt{\|\B\Gamma_2\|_2\|\B\Gamma_1^{-1}\|_2} C_g  \right\} \leq \delta/2 + \delta/2  =\delta, \]
where $C_g$ is defined in \cref{eqn:const}.

\textbf{Step 4. Second bound} Partition $\B{V}\t \B{T}^2 \B{V}$ as 
\[ \B{V}\t \B{T}^2 \B{V}  = \bmat{\B{V}_k\t \B{T}^2 \B{V}_k & *\\ *& \B{V}_\perp\t \B{T}^2 \B{V}_\perp},\]
where $*$ denote terms that are unimportant in the calculation. 
Define $\Vh \equiv \B{T}^{1/2}\B{V}$ and notice that  $\Vh$ is orthogonal. This implies $\B{V}\t \B{T}^2 \B{V} = \Vh\t \B{T} \Vh$ and that, by similarity, $\B{V}\t \B{T}^2\B{V}$ and $\B{T}$  have the same eigenvalues. Let $\B{M}\in\rnn$ and let $\lambda_k(\cdot)$ denote  the eigenvalues of a matrix arranged in descending order for $k=1,\dots,n$.  By Cauchy interlacing theorem~\cite[Theorem 3.2.9]{bjorck2015numerical}, 
\[  \lambda_k(\B{T}) = \lambda_k(\B{V}\t \B{T}^2  \B{V}) \geq \lambda_k(\B{V}_k\t \B{T}^2\B{V}_k) \geq \lambda_n(\B{V}\t \B{T}^2 \B{V}) = \lambda_n(\B{T}). \]
Therefore, $\lambda_k^{-1}(\B{T}) \leq  \lambda_k^{-1}(\B{V}_k\t \B{T}^2 \B{V}_k) \leq \lambda_n^{-1}(\B{T})$. Similarly, using Cauchy interlacing theorem
 $$\lambda_1(\B{T}) = \lambda_1(\B{V}\t\B{T}^2\B{V}) \geq \lambda_1(\B{V}_\perp\t \B{T}^2 \B{V}_\perp) \geq \lambda_{k+1}(\B{V}\t\B{T}^2\B{V}) =\lambda_{k+1}(\B{T}).$$ 
 Since $\B\Gamma_1$ and $\B\Gamma_2$ are positive definite, their singular values are their respective eigenvalues; therefore,
 \[ \frac{\lambda_{k+1}(\B{T})}{\lambda_k(\B{T})}\leq \|\B\Gamma_2\|_2 \|\B\Gamma_1^{-1}\|_2 = { \frac{\lambda_{1}(\B{V}_\perp\t \B{T}^2 \B{V}_\perp)}{\lambda_{k}(\B{V}_k\t \B{T}^2\B{V}_k)}}
 \leq \frac{\lambda_1(\B{T})}{\lambda_n(\B{T})} =  \kappa_2(\B{T}).\]
 Combine with the result of step 3, we obtain the second bound
\end{proof}
The proof identifies that the condition number $\kappa_2(\B{T})$ plays a role in the error analysis, which suggests that a large condition number can result in a large error. However, this may be pessimistic for the following reason. The proof also gives a lower bound $\lambda_{k+1}(\B{T})/\lambda_k(\B{T})$ which can be attained by a specific instance of $\B{V}$. This lower bound shows that an ill-conditioned matrix $\B{T}$ does not necessarily amplify the error in the low-rank approximation.

We can prove the following probabilistic bound for the error in the low-rank approximation.
\begin{theorem}\label{thm:prob}
Let $\B\Omega \in \R^{n\times \ell}$ be a standard Gaussian random matrix where $k\leq \rank(\B{A})$ is the target rank. Let the oversampling parameter $p$ satisfy $p \geq 2$, $\ell = k + p  \leq \min\{m,n\}$,  and the failure rate  $0 < \delta <1 $. Let $\B{Q}$ be computed using \cref{alg:randsubspace} with $q \geq 0$ subspace iterations. With probability at least $1-\delta$ 
\[\normst{\B{A} - \B{QQ}\t \B{S A}} \leq\left(1 + \gamma_k^{4q+2}\kappa_2(\B{T})C_g^2 \right)^{1/2} \sigma_{k+1},\]
and 
\[ \normst{\B{A}- \B{QQ}\t \B{S A}} \leq \left(1 + \kappa_2(\B{T})C_g^2 \right)^{1/(4q+2)} \sigma_{k+1} ,\]
where the constant $C_g$ is defined in \eqref{eqn:const}. 
\end{theorem}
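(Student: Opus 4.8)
The plan is to obtain both inequalities by inserting the high-probability estimate of \cref{lemma:inter} into the two deterministic bounds of \cref{thm:gapdep,thm:gapindep}. Since \cref{lemma:inter} already furnishes $\|\Oh_2\Oh_1^\dagger\|_2^2 \le \kappa_2(\B{T})C_g^2$ on an event of probability at least $1-\delta$, the essential probabilistic content is already in hand; what remains is purely deterministic algebra that rewrites each structural bound in terms of $\sigma_{k+1}$, $\gamma_k$, and $\|\Oh_2\Oh_1^\dagger\|_2$, followed by a single substitution. Crucially, both final inequalities refer to the \emph{same} random quantity $\|\Oh_2\Oh_1^\dagger\|_2$, so they hold simultaneously on one event of probability $1-\delta$; no additional union bound, and hence no deterioration of $\delta$, is incurred.

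The gap-independent bound is immediate. \cref{thm:gapindep} states $\normst{\B{A} - \B{QQ}\t\B{SA}} \le (1 + \|\Oh_2\Oh_1^\dagger\|_2^2)^{1/(4q+2)}\sigma_{k+1}$, which already has exactly the target form. Substituting $\|\Oh_2\Oh_1^\dagger\|_2^2 \le \kappa_2(\B{T})C_g^2$ and using monotonicity of $x \mapsto (1+x)^{1/(4q+2)}$ yields the second claimed inequality with no further manipulation.

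For the gap-dependent bound I would start from \cref{thm:gapdep}, namely $\normst{\B{A}-\B{QQ}\t\B{SA}}^2 \le \|\B\Sigma_\perp\|_2^2 + \gamma_k^{4q}\|\B\Sigma_\perp\Oh_2\Oh_1^\dagger\|_2^2$. First use $\|\B\Sigma_\perp\|_2 = \sigma_{k+1}$, then apply submultiplicativity $\|\B\Sigma_\perp\Oh_2\Oh_1^\dagger\|_2 \le \|\B\Sigma_\perp\|_2\,\|\Oh_2\Oh_1^\dagger\|_2 = \sigma_{k+1}\|\Oh_2\Oh_1^\dagger\|_2$ to factor $\sigma_{k+1}^2$ out of both terms, giving $\normst{\B{A}-\B{QQ}\t\B{SA}}^2 \le \sigma_{k+1}^2\bigl(1 + \gamma_k^{4q}\|\Oh_2\Oh_1^\dagger\|_2^2\bigr)$. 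Inserting the \cref{lemma:inter} estimate and taking square roots then produces a bound of the advertised shape, with a power of $\gamma_k$ multiplying $\kappa_2(\B{T})C_g^2$ inside the square root. The one point demanding care is the exact power of $\gamma_k$: the direct chain above yields exponent $4q$, so to land on the stated $4q+2$ I would re-examine how the factor $\B\Sigma_\perp$ is distributed between $\|\B\Sigma_\perp\|_2$ and the product $\|\B\Sigma_\perp\Oh_2\Oh_1^\dagger\|_2$ (in particular whether an additional $\sigma_{k+1}/\sigma_k$ can be kept grouped with the subspace-iteration power), and I would sanity-check the exponent against the $q=0$ base case.

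The genuine technical obstacle does not lie in \cref{thm:prob} itself but upstream, in \cref{lemma:inter}: controlling $\|\Oh_2\Oh_1^\dagger\|_2$ requires identifying the $\B{T}$-weighted Gaussian covariances $\B\Gamma_1,\B\Gamma_2$, applying the extreme–singular-value tail bounds of \cref{lem:gauss} to the whitened matrices $\B\Gamma_2^{-1/2}\Oh_2$ and $\B\Gamma_1^{-1/2}\Oh_1$, and converting $\|\B\Gamma_2\|_2\|\B\Gamma_1^{-1}\|_2$ into $\kappa_2(\B{T})$ via Cauchy interlacing. Once that lemma is granted, the proof of \cref{thm:prob} reduces to the short deterministic post-processing sketched above, so the only real work at this level is the careful bookkeeping of the $\gamma_k$ powers and the shared probability event.
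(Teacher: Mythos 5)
Your proposal follows the paper's proof exactly: the paper's argument for \cref{thm:prob} is a one-line substitution of \cref{lemma:inter} into \cref{thm:gapdep,thm:gapindep}, and your observation that both bounds hold simultaneously on the single event $\{\|\Oh_2\Oh_1^\dagger\|_2^2 \le \kappa_2(\B{T})C_g^2\}$, so that no union bound is needed, is correct. Your hesitation over the power of $\gamma_k$ is also well founded: direct substitution into \cref{thm:gapdep}, using $\|\B\Sigma_\perp\Oh_2\Oh_1^\dagger\|_2 \le \sigma_{k+1}\|\Oh_2\Oh_1^\dagger\|_2$, yields $\left(1+\gamma_k^{4q}\kappa_2(\B{T})C_g^2\right)^{1/2}\sigma_{k+1}$, and since $\gamma_k^{4q}\sigma_{k+1}^2=\gamma_k^{4q+2}\sigma_k^2$ the exponent $4q+2$ can only be obtained by replacing the prefactor $\sigma_{k+1}^2$ with $\sigma_k^2$ in the second term. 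The form stated in the theorem, with $\sigma_{k+1}$ factored out and $\gamma_k^{4q+2}$ inside, is strictly smaller whenever $\gamma_k<1$ and does not follow from \cref{thm:gapdep}; this is a slip in the theorem statement rather than a gap in your argument, and your version with exponent $4q$ is the one that is actually proved.
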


\begin{proof}

Plug the results of \cref{lemma:inter} into the statement of \cref{thm:gapindep,thm:gapdep} give the desired results.
\end{proof}
We make several remarks. First, if the matrix $\B{A}$ has rank$-k$, then with high probability we recover the exact rank-$k$ generalized SVD since $\sigma_{k+1} = 0$. Second, the bounds depend on the condition number of the weighting matrix $\kappa_2(\B{T})$. Third, the value of the constant $C_g$ is likely to be not optimal and it may be possible to improve upon. It is easily seen that as $q\rightarrow \infty$, the factor $\left(1 + \kappa_2(\B{T})C_g^2\right)^{1/(4q+2)}$ approaches $1$ and $$\normst{\B{A}-\B{QQ}\t \B{SA}} \approx \sigma_{k+1}.$$ Similarly, if $\gamma_k < 1$, then $\gamma_k^{4q}$ approaches $0$, so that once again the above approximation holds.

{
 \subsection{Using a preconditioner for improved accuracy}\label{ssec:precond}
 From the error analysis in \cref{thm:gapdep,thm:gapindep}, we see that the error bounds have the factor of $\kappa_2(\B{T})$, which can be large if $\B{T}$ is ill-conditioned. Motivated by this observation, we propose a new choice of the distribution of $\B\Omega$ to mitigate the issue due to ill-conditioning.

 Assume that we have an approximate factorization of the form $\B{T}^{-1} \approx \B{LL}\t$, so that we call $\B{L}$ the preconditioner of $\B{T}$.  We have two requirements of the preconditioner: (1) the condition number $\kappa_2(\B{L}\t\B{TL})$ should be small compared to the condition number $\kappa_2(\B{T})$, and (2) the operation $\B{L}^{-1}\B{x}$ should be cheap to perform. For example, such a factorization maybe available using the Incomplete Cholesky factorization or the Sparse Approximate Inverse preconditioner (SPAI) approach \cite{saad2003iterative}.

 If such a preconditioner is available, we show how to improve the accuracy of the randomized algorithm for GSVD. The main idea is to sample from a different distribution to construct the matrix $\B\Omega$. Let $\{\B\omega_j\}_{j=1}^\ell$ be independent samples drawn from  $\mc{N}(\B{0},\B{LL}\t)$ and  let 
 \[\B\Omega = \bmat{\B\omega_1 & \dots &  \B\omega_\ell} \in \R^{n\times \ell}.\]
  Note that we can write $\B\Omega = \B{LG}$, where $\B{G}\in \R^{n\times \ell}$ is a standard Gaussian random matrix. To see this, let $\B{g}_j$ be the j-th column of $\B{G}$. Then, $\mb{E}[\B\omega_j] = \B{0}$ and
  \[ \mb{E}\,[ \B{\omega}_j\B\omega_j\t] = \B{L}\mb{E}\,[ \B{g}_j\B{g}_j\t]\B{L}\t= \B{LL}\t    .\]
  We invoke \cref{alg:randsvd} with the matrix $\B\Omega$ constructed as before, which we call the preconditioned Gaussian random matrix. The only addition to the computational cost in \cref{alg:randsvd} is $\ell$ additional solves involving $\B{L}$. The following result captures the error in the low-rank approximation.
  \begin{theorem}[Preconditioned Gaussian random matrix]\label{thm:prob_prec}
Let $\B\Omega = \B{LG} \in \R^{n\times \ell}$, where $\B{G}$ is a standard Gaussian random matrix. Consider the same assumptions and notation as of \cref{thm:prob}. With probability at least $1-\delta$ 
\[\normst{\B{A} - \B{QQ}\t \B{S A}} \leq\left(1 + \gamma_k^{4q+2}\kappa_2(\B{L}\t\B{TL})C_g^2 \right)^{1/2} \sigma_{k+1}, \]
and
\[ \normst{\B{A}- \B{QQ}\t \B{S A}} \leq \left(1 + \kappa_2(\B{L}\t\B{TL})C_g^2 \right)^{1/(4q+2)} \sigma_{k+1} ,\]
where the constant $C_g$ is defined in \cref{eqn:const}.

\end{theorem}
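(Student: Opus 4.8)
The plan is to mirror the proof of \cref{thm:prob} exactly: the two claimed bounds follow by substituting a probabilistic estimate for $\|\Oh_2\Oh_1^\dagger\|_2^2$ into \cref{thm:gapdep,thm:gapindep}, so the entire task reduces to re-deriving the statement of \cref{lemma:inter} for the new sampling distribution and checking that $\kappa_2(\B{T})$ gets replaced by $\kappa_2(\B{L}\t\B{TL})$. Writing $\B\Omega = \B{LG}$ with $\B{G}$ a standard Gaussian matrix, the governing quantities become $\Oh_1 = \B{V}_k\t\B{TLG}$ and $\Oh_2 = \B{V}_\perp\t\B{TLG}$. As in \cref{lemma:inter}, each column of these matrices is a zero-mean Gaussian vector, but now with covariances $\widetilde{\B\Gamma}_1 \equiv \B{V}_k\t\B{TLL}\t\B{TV}_k$ and $\widetilde{\B\Gamma}_2 \equiv \B{V}_\perp\t\B{TLL}\t\B{TV}_\perp$ in place of $\B\Gamma_1,\B\Gamma_2$ from \eqref{eqn:cov}. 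Assuming $\B{L}$ is invertible (as is natural for a preconditioner with $\B{T}^{-1}\approx\B{LL}\t$), both covariances are positive definite and $\B{V}_k\t\B{TL}$ has rank $k$, so the full-rank assumption \eqref{eqn:assump} holds almost surely.

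Next I would run Steps 1--3 of the proof of \cref{lemma:inter} verbatim with $\B\Gamma_i$ replaced by $\widetilde{\B\Gamma}_i$: the whitened matrices $\widetilde{\B\Gamma}_2^{-1/2}\Oh_2$ and $\widetilde{\B\Gamma}_1^{-1/2}\Oh_1$ are standard Gaussian, so \cref{lem:gauss} and the pseudoinverse factorization $(\widetilde{\B\Gamma}_1^{-1/2}\Oh_1)^\dagger = \Oh_1^\dagger\widetilde{\B\Gamma}_1^{1/2}$ go through unchanged. Crucially, the constant $C_g$ in \eqref{eqn:const} depends only on $\ell$, $p$, $n$, $k$, and $\delta$, not on $\B{T}$ or $\B{L}$, so it is identical. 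A union bound then yields, with probability at least $1-\delta$, the first estimate $\|\Oh_2\Oh_1^\dagger\|_2^2 \leq \|\widetilde{\B\Gamma}_2\|_2\,\|\widetilde{\B\Gamma}_1^{-1}\|_2\,C_g^2$.

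The one step requiring a genuine new idea is the analogue of Step 4, where the covariance ratio must be controlled by $\kappa_2(\B{L}\t\B{TL})$. I would again set $\Vh = \B{T}^{1/2}\B{V}$, which is orthogonal since $\B{V}\t\B{TV}=\B{I}$, and write $\B{V}\t\B{TLL}\t\B{TV} = \Vh\t\bigl(\B{T}^{1/2}\B{LL}\t\B{T}^{1/2}\bigr)\Vh$, so that $\B{V}\t\B{TLL}\t\B{TV}$ is orthogonally similar to $\B{T}^{1/2}\B{LL}\t\B{T}^{1/2}$. The key algebraic observation is then that $\B{T}^{1/2}\B{LL}\t\B{T}^{1/2} = (\B{T}^{1/2}\B{L})(\B{T}^{1/2}\B{L})\t$ has the same spectrum as $(\B{T}^{1/2}\B{L})\t(\B{T}^{1/2}\B{L}) = \B{L}\t\B{TL}$, since the products $\B{XY}$ and $\B{YX}$ of square matrices share eigenvalues. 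Applying the Cauchy interlacing theorem to $\B{V}_k\t(\cdot)\B{V}_k$ and $\B{V}_\perp\t(\cdot)\B{V}_\perp$ exactly as before gives $\|\widetilde{\B\Gamma}_2\|_2\,\|\widetilde{\B\Gamma}_1^{-1}\|_2 \leq \lambda_1(\B{L}\t\B{TL})/\lambda_n(\B{L}\t\B{TL}) = \kappa_2(\B{L}\t\B{TL})$.

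Substituting $\|\Oh_2\Oh_1^\dagger\|_2^2 \leq \kappa_2(\B{L}\t\B{TL})\,C_g^2$ into \cref{thm:gapdep} and \cref{thm:gapindep} produces the two asserted bounds, completing the argument. The main obstacle is precisely the identification in the previous paragraph: in the unpreconditioned case the similarity $\B{V}\t\B{T}^2\B{V}=\Vh\t\B{T}\Vh$ lands cleanly on $\B{T}$, whereas here one must recognize that the preconditioned covariance ratio is governed by the spectrum of $\B{L}\t\B{TL}$ via the $\B{XY}$/$\B{YX}$ eigenvalue identity; everything else is a mechanical re-indexing of the proofs of \cref{lemma:inter,thm:prob}.
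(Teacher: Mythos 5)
Your proposal is correct and follows essentially the same route as the paper's proof: the paper likewise defines $\B\Upsilon = \B{TLL}\t\B{T}$, forms the covariances $\B{V}_k\t\B\Upsilon\B{V}_k$ and $\B{V}_\perp\t\B\Upsilon\B{V}_\perp$, reruns Steps 1--3 of \cref{lemma:inter} with the same constant $C_g$, and controls the covariance ratio via the orthogonal similarity through $\B{T}^{1/2}\B{V}$, Cauchy interlacing, and the $\B{CD}$/$\B{DC}$ eigenvalue identity to land on $\kappa_2(\B{L}\t\B{TL})$. Your explicit remark that invertibility of $\B{L}$ guarantees the rank assumption \eqref{eqn:assump} almost surely is a small point the paper leaves implicit, but otherwise the arguments coincide.
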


\begin{proof} From  \cref{thm:gapdep,thm:gapindep}, it is clear that we have to focus on $\|\Oh_2\Oh_1^\dagger\|_2$ which appears in the statement of both theorems. Now define $\B\Upsilon \equiv \B{TLL}\t\B{T}$ and in analogy with \eqref{eqn:cov} define
\[ \B\Gamma_2 \equiv \B{V}_\perp\t \B\Upsilon\B{V}_\perp \qquad \B\Gamma_1 \equiv \B{V}_k\t \B\Upsilon\B{V}_k .\]
Verify that $\B\Gamma_2^{-1/2}\Oh_2 \in \R^{(n-k)\times \ell}$ and $\B\Gamma_1^{-1/2}\Oh_1 \in \R^{k\times \ell}$ are both standard Gaussian random matrices. Therefore, combining steps 1 to 3 of the proof of \cref{lemma:inter}, with probability at least $1-\delta$ 
\[ \|\Oh_2\Oh_1^\dagger\|_2 \leq \|\B\Gamma_2\|_2 \|\B\Gamma_1^{-1}\|C_g^2.\]

 Note that $\B{V}\t \B{\Upsilon} \B{V} = \B{V}\t\B{T}^{1/2}\B{T}^{1/2}(\B{LL}\t)\B{T}^{1/2} \B{T}^{1/2}\B{V} $. Since $\B{T}^{1/2}\B{V}$ is orthogonal,  $\B{V}\t \B{\Upsilon} \B{V}$ and $\B{T}^{1/2}(\B{LL}\t)\B{T}^{1/2}$ have the same eigenvalues. By Cauchy interlacing theorem, we have
 \[ \|\B\Gamma_2\|_2 \|\B\Gamma_1^{-1}\|_2 = { \frac{\lambda_{1}(\B{V}_\perp\t \B{\Upsilon} \B{V}_\perp)}{\lambda_{k}(\B{V}_k\t \B\Upsilon \B{V}_k)}}
 \leq \frac{\lambda_1(\B{T}^{1/2}(\B{LL}\t)\B{T}^{1/2})}{\lambda_n(\B{T}^{1/2}(\B{LL}\top)\B{T}^{1/2})} .\]
If $\B{C},\B{D} \in \rnn$ then $\B{CD}$ and $\B{DC}$ have the same eigenvalues; see~\cite[Theorem 1.3.22]{horn1990matrix}; so, $\B{T}^{1/2}(\B{LL}\t)\B{T}^{1/2}$ has the same eigenvalues as $\B{L}\t\B{TL}$. Therefore, with probability at least $1-\delta$
 \[ \|\Oh_2\Oh_1^\dagger\|_2^2 \leq \|\B\Gamma_2\|_2 \|\B\Gamma_1^{-1}\|_2 C_g^2 \leq \kappa_2(\B{L}\t\B{TL})C_g^2.\]
 Plug this bound into the results of \cref{thm:gapdep,thm:gapindep} to obtain the desired result.
\end{proof}
If the preconditioned operator $\B{L}\t\B{TL}$ has a lower condition number than $\B{T}$, then the bound in \cref{thm:prob_prec} suggests that the error should be lower (this is confirmed by numerical experiments). Of course, from an accuracy perspective, in the ideal case $\B{L} = \B{T}^{-1/2}$ so that the preconditioned operator has a condition number of $1$.

\subsection{Analysis of randomized algorithm for GHEP} \label{ssec:ghepanalysis}
Consider the GHEP
 \begin{equation} \label{eqn:ghep} \B{Ax}=\lambda\B{Bx},
 \end{equation}
 where $\B{A},\B{B}\in \rnn$ are symmetric, $\B{A}$ is positive semidefinite, and $\B{B}$ is positive definite. We can extend the analysis developed in this section to a randomized algorithm for GHEP.

 To this end, consider the generalized eigenvalues of \cref{eqn:ghep} in descending order 
 \[\lambda_1 \geq \dots \geq \lambda_k \geq \lambda_{k+1} \geq \dots \geq \lambda_n.\]
 Let us define $\B{C} \equiv \B{B}^{-1}\B{A}$; note that $\lambda_j$'s are also the eigenvalues of $\B{C}$. Furthermore observe that the $(\B{B},\B{B})$-generalized singular values of $\B{C}$ satisfy for $j=1,\dots,n$ 
    \[\sigma_{j}(\B{C}) = s_j(\B{B}^{1/2}\B{CB}^{-1/2}) =s_{j}(\B{B}^{-1/2}\B{AB}^{-1/2}) = \lambda_{j}.\]

 As before, we draw a Gaussian random matrix $\B\Omega \in \R^{n\times \ell}$ and form $\B{Y} = \B{C\Omega}$. Then, we compute the weighted QR factorization of $\B{Y}=\B{QR}$ where $\B{Q}$ has $\B{B}$-orthonormal columns. Then, we have the low-rank approximation
 \[\B{C} \approx \B{QQ}\t\B{BC} = \proj{\B{Q}}\B{C}. \]
 To analyze the error in this low-rank representation, we can apply \cref{thm:prob} with $q=0$, $m=n$  and $\B{S} = \B{T} = \B{B}$. As before, let $0<  \delta < 1$ be a user-defined parameter that denotes the probability of failure. With probability at least $1-\delta$, 
 \begin{equation}\label{eqn:gheperr} \|(\B{I}-\B{QQ}\t\B{B})\B{C}\|_{\B{B}\rightarrow \B{B}} \leq \left(1 + \kappa_2(\B{B})C_g^2 \right)^{1/2} \lambda_{k+1},\end{equation}
 
 This bound is easy to interpret, since the absolute error is expressed in terms of the $(k+1)$-th generalized eigenvalue. This provides an alternative to \cite[Theorem 1]{saibaba2016randomized}; however, a direct comparison between these two results is difficult since the results are expressed using the $(\B{B},\B{I})-$GSVD of $\B{C}$.  We would also like to mention that the final result in \cite[Theorem 1]{saibaba2016randomized} is missing a factor of $2$ on the right hand side. If a symmetric low-rank representation is desired, we can use the approximation
 \[ \B{C} \approx \proj{\B{Q}}\B{C}\proj{\B{Q}} \qquad\text{or}\qquad  \B{A} \approx \B{QQ}\t\B{BAQQ}\t\B{B},\]
 where $\proj{\B{Q}} = \B{QQ}\t\B{B}$. Using~\cite[Equation (7)]{saibaba2016randomized}, 
 \[  \|\B{C} -\proj{\B{Q}}\B{C}\proj{\B{Q}}\|_{\B{B}\rightarrow \B{B}} \leq 2\|\B{C}-\proj{\B{Q}}\B{C}\|_{\B{B}\rightarrow \B{B}}.\]
 Combined with~\cref{eqn:gheperr}, we have with probability at least $1-\delta$ 
 \[ \|\B{C} -\proj{\B{Q}}\B{C}\proj{\B{Q}}\|_{\B{B}\rightarrow \B{B}} \leq 2\left(1 + \kappa_2(\B{B})C_g^2 \right)^{1/2} \lambda_{k+1}.\]
 This analysis is beneficial for the computation of the GSVD which has been formulated as an appropriate GHEP as in \cref{ssec:ghep}. Furthermore, we note that the basic version of the algorithm can be improved by using a variation of the randomized subspace iteration, \cref{alg:randsubspace}. Additional theoretical results can be derived using the approach in \cref{thm:gapdep} or \cref{thm:gapindep}.

\section{Numerical Experiments}\label{sec:numex}

In~\cref{ssec:testexp}, we apply the proposed algorithms on a set of test matrices. The numerical experiments explore various features of the algorithms such as the number of subspace iterations and condition number of the weighting matrices. Furthermore, we compare the performance of the proposed algorithms against previously proposed algorithms. In~\cref{ssub:hdsa_inexact}, we present a numerical study of the robustness of randomized algorithms to inexactness arising in HDSA applications. Then, in~\cref{ssub:hdsainv}, we apply the randomized algorithms to compute hyper-differential sensitivity indices, and demonstrate the accuracy and computational benefits of the proposed algorithms.

\subsection{{Experiments with Test Matrices}}\label{ssec:testexp}
\subsubsection{Description of the test matrices}\label{sssec:test}
For the matrix $\B{A} \in \R^{128\times 128}$, we have  the following choices:
\begin{enumerate}
    \item \textbf{Controlled gap} The first test matrix $\B{A} \in \R^{128 \times 128}$ is constructed using the formula
\[ \B{A} = \sum_{j=1}^{r}\frac{\text{gap}}{j} \, \B{x}_j \B{y}_j^\top +
  \sum_{j=r+1}^{128}\frac{1}{j} \,\B{x}_j \B{y}_j^\top,\]
where $\B{x}_j \in \R^{128}$ and $\B{y}_j \in \R^{128}$ are sparse random vectors with non-negative entries generated using the MATLAB commands \verb|sprand(128,1,0.025)| and \verb|sprand(128,1,0.025)| respectively and $\text{gap} = 10$.
\item \textbf{Low-rank plus noise} which takes  the form
\[ \B{A} = \begin{bmatrix} \B{I}_r & \B{0} \\ \B{0} & \B{0}\end{bmatrix} + \sqrt{\frac{\gamma_\text{noise} r}{2n^2}}(\B{G} + \B{G}\t),\]
where $\B{G} \in \R^{n\times n}$ is a random Gaussian matrix and we take $\gamma_\text{noise} = 10^{-2}$.
\item \textbf{Low-rank plus decay} which takes the form
\[ \B{A} = \diag(\underbrace{1,1,\dots,1}_r,2^{-d},3^{-d}\dots,(n-r+1)^{-d}),\]
where $n=128$ and $d= 1$.
\item \textbf{Decay} which takes the form 
\[ \B{A} = \diag(0.9^1,\dots,0.9^{128}).\]

\end{enumerate}
The test matrices described here, model different scenarios of singular value decay. A more detailed description of these matrices is described in \cite[Section 6.1]{saibaba2019randomized}. We take the parameter $r=15$. The matrices $\B{S}$ and $\B{T}$ are both generate using \textsc{MATLAB}'s \verb|gallery| function. The matrix $\B{S}$ is computed using \verb|gallery('minij',128)|, whereas $\B{T}$ is generated using \verb|gallery(`randsvd', n, -10^4, 5)| with the condition number $10^4$. 

\begin{figure}[!ht]
    \centering
    \includegraphics[scale=0.5]{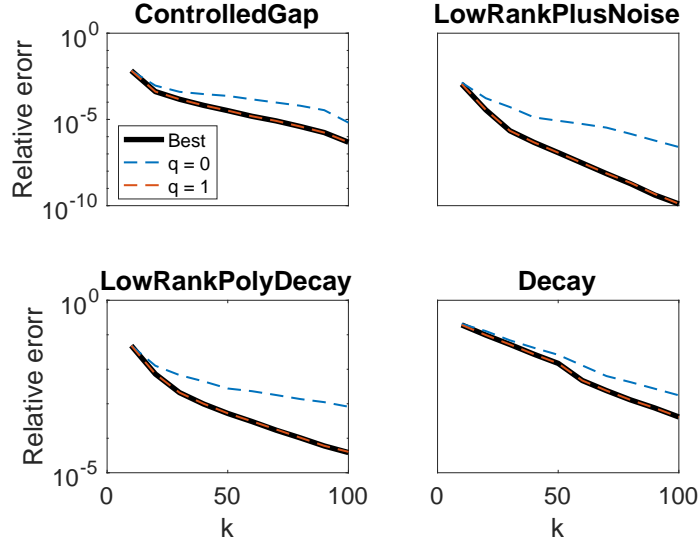}
    \caption{The error in the low-rank approximations obtained using subspace iterations $q=0,1$. `Best' refers to $\sigma_{k+1}/\sigma_1$, the error in the  best rank-$k$ representation.}
    \label{fig:testmatrices}
\end{figure}

For each test matrix, we apply  \cref{alg:randsvd}, and compute a low-rank approximation as a function of the target rank $k$. The error in the low-rank approximation is plotted in \cref{fig:testmatrices}.  The  
oversampling parameter is taken to be $p=10$ and the number of subspace iterations $q=0,1$.  If $\Ah$ is an approximation of $\B{A}$, the  relative error is defined as 
\[ \text{error} \equiv \frac{\normst{\B{A}-\Ah}}{\normst{\B{A}}}. \]
For all the test matrices, we see that the error in the low-rank approximation decreases as the target rank $k$ increases. We also observe that, with increasing subspace iterations, the error in the low-rank approximation decreases and with one subspace iteration $q=1$, the error is comparable to the ``best'' possible error $\sigma_{k+1}/\sigma_1$.

\begin{figure}[!ht]
    \centering
    \includegraphics[scale=0.5]{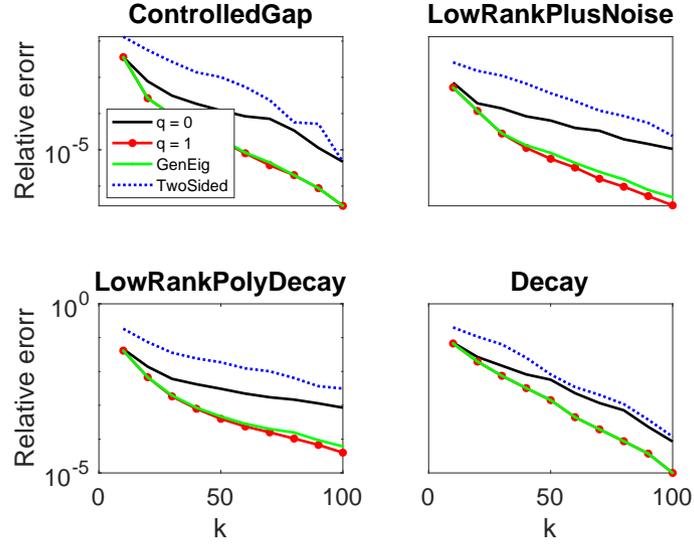}
    \caption{Comparing the accuracy between different methods. 
    }
    \label{fig:comp}
\end{figure}

\subsubsection{Comparing different methods}\label{sssec:comp}
We want to compare the performance of \cref{alg:randsvd}, which we call `GSVD(q)' where $q$ denotes the number of subspace iteration with two other approaches. The first method computes the GSVD by formulating it as a GHEP $\B{A}\t\B{SAx} = \lambda\B{Tx}$ as described in \cref{ssec:ghep}; we call this `GenEig.' The second method, first described in \cite[Section 3]{saibaba2016randomized}, uses a two-sided approach as follows. We draw two random matrices $\B\Omega \in \R^{n\times \ell}$ and $\B\Psi \in \R^{m\times \ell}$ and compute $\B{Y}_{\B\Omega} = \B{A\Omega}$ and $\B{Y}_{\B\Psi} = \B{A}\t\B{\Psi}$. We compute the thin QR factorization of $\B{Y}_{\B\Omega}$ and $\B{Y}_{\B\Psi}$ to obtain $\B{Q}$ (which has $\B{S}$-orthonormal columns), and $\B{Z}$ (which has $\B{T}^{-1}$-orthonormal columns) respectively. This results in the low-rank approximation
\[ \B{A} \approx \B{QQ}\t\B{SAT}^{-1}\B{ZZ}\t.\]
To convert into the GSVD format, we compute $\B{F} = \B{Q}\t\B{SAT}^{-1}\B{Z}$ and its thin SVD $\B{F} = \B{U}_{\B{F}}\Sh \B{V}_{\B{F}}\t$.  We obtain $\B{A} \approx \Uh\Sh\Vh\t\B{T}$ by computing $\Uh = \B{QU}_{\B{F}}$ and $\Vh = \B{T}^{-1}\B{ZV}_{\B{F}}$ which is denoted as the `Two-Sided' approach.  We point out that because of a mathematical error, \cite[Section 3]{saibaba2016randomized} we do not compute the $(\B{S},\B{T})$-GSVD of $\B{A}$, and the procedure described above is used instead. \cref{fig:comp} compares the error in all four test matrices described. The matrices $\B{S}$ and $\B{T}$ are defined as before. We observe that `GSVD(1)' is the most accurate. However,  `GenEig' are computational comparable but slightly less accurate. The `TwoSided' approach is more expensive and has the worst error.  

\begin{figure}[!ht]
    \centering
    \includegraphics[scale=0.5]{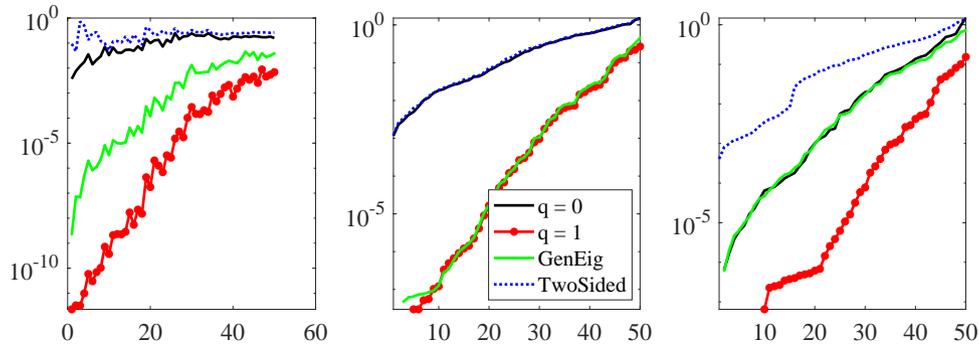}
    \caption{Comparing the accuracy between different methods (left) absolute error in the singular values, (center) canonical angles between the ``exact'' and approximate left singular vectors, and (right) canonical angles corresponding to the right singular vectors.    }
    \label{fig:comp_svs_angles}
\end{figure}
{
\cref{fig:comp} compares the accuracy in the low-rank approximations. We now examine the accuracy of the computed singular values and singular vectors. For our next experiment, we pick the second test matrix `Low-rank plus decay' and fix $k =50$ and $p = 10$. The conclusions we describe below for this matrix are consistent with other test matrices. The left panel in \cref{fig:comp_svs_angles} shows the absolute error in singular values computed using the different methods. We observe in each method that the error in the larger singular values is smaller. As before, we see that `TwoSided' and \cref{alg:randsvd} with $q=0$ have the worst performance. Although `GenEig' does far better than these two methods,  \cref{alg:randsvd} with $q=1$ produces the most accurate singular values. Next, we compare the accuracy of the singular vectors by means of the canonical angles. Suppose $\B{Z},\Zh \in \R^{n\times k}$ have $\B{W}$ orthonormal columns, then the cosine of the canonical angles can be computed by the singular values of $\Zh\t \B{WZ}$. See~\cite{knyazev2002principal} for definitions and details of the computation. The center panel of \cref{fig:comp_svs_angles} plots the canonical angles between the ``exact'' and approximate left singular vectors; similarly, the right panel plots the canonical angles between the ``exact'' and approximate right singular vectors. For the left singular vectors, `TwoSided' is comparable with `GSVD(0)', whereas `GenEig' is comparable with `GSVD(1)'. However, for the right singular vectors, `GSVD(1)' is the most accurate. The accuracy of the right singular vectors can be explained by the arguments made in \cite{saibaba2019randomized} (see discussion after Theorem 1). While the accuracy of low-rank representations using `GenEig' and `GSVD(1)' is comparable, there is a considerable difference in the accuracy of the singular values and the singular vectors. For this reason, we do not use the `TwoSided' in subsequent experiments. 
}

\subsubsection{Effect of condition number} \cref{sec:analysis}, and in particular \cref{thm:gapdep,thm:gapindep}, clearly identified the role of condition number $\kappa_2(\B{T})$  in the error analysis. We investigate the extent of the impact of a large condition number. For this purpose, we select the `Low-rank plus decay' matrix and the same $\B{S}$ matrix, described in \cref{sssec:test}. However, we construct four different matrices for $\B{T}$ using \textsc{MATLAB}'s \verb|gallery/randsvd| with condition numbers $\{10,10^4,10^7,10^{10}\}$. The relative error in the low-rank approximation is plotted as a function of the target rank $k$; the oversampling parameter is still $p=10. $ The results are plotting in \cref{fig:condT}. Two main observations can be drawn: first, the error appears to increase with increasing condition number, confirming the analysis, and second, subspace iteration with $q=1$ is sufficiently accurate to counteract the effects of ill-conditioning. We observe similar trends for other choices of $\B{A}$ but do not report them here.

\begin{figure}[!ht]
    \centering
    \includegraphics[scale=0.5]{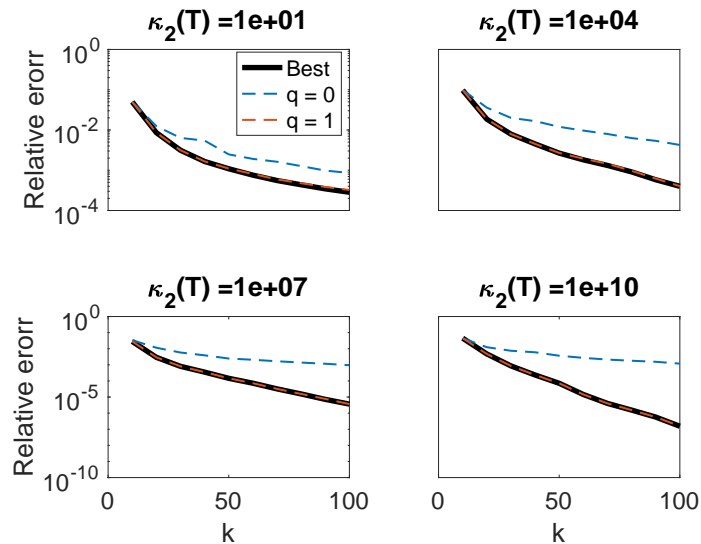}
    \caption{The relative error in the low-rank representations  obtained using subspace iteration with increasing condition number of $\B{T}$. In general, a larger condition number results in a larger error confirming the analysis in~\cref{ssec:prob}. Here, `Best' refers to the relative error in the best low-rank representation, i.e., $\sigma_{k+1}/\sigma_1$.}    \label{fig:condT}
\end{figure}
\begin{figure}[!ht]
    \centering
    \includegraphics[scale=0.4]{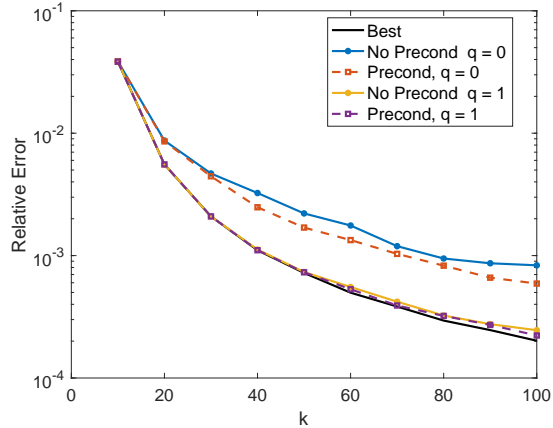}
    \caption{The effect of the preconditioning on the relative error in the low-rank representation. Here, `Best' refers to the relative error in the best low-rank representation, i.e., $\sigma_{k+1}/\sigma_1$.}
    \label{fig:precond}
\end{figure}
In the next experiment, we take $\B{T}$ as the matrix \verb|HB/no7| from the SuiteSparse matrix collection~\cite{davis2011university}. This matrix is of size $729\times 729$ with a condition number $\approx 2.4 \times 10^9$.   The matrices $\B{A}$ and $\B{S}$ are constructed in the same way as the previous experiment; the only difference is that are both of size $729\times 729$. The preconditioner is constructed using Incomplete Cholesky factorization via \textsc{MATLAB}'s \verb|ichol| with drop tolerance $10^{-4}$, and parameter `michol' set to `on'. The preconditioned operator has the condition number $\kappa_2(\B{LTL}\t) \approx 1.4 \times 10^5$. We compute the relative error in the low-rank representations with and without preconditioner and the results are reported in \cref{fig:precond}. It is readily seen that the use of the preconditioner improves the accuracy especially for the larger values of the target rank. However, as with the previous experiment, $q=1$ subspace iterations is sufficient to produce an accurate low-rank representations. We conclude if $\B{T}^{-1}$ is cheap to apply, then an additional round of subspace iteration is recommended. If, on the other hand, $\B{T}$ is too expensive to apply, but a good preconditioner is available, it can be used to improve accuracy.

\subsection{Numerical Study of Inexactness in HDSA} \label{ssub:hdsa_inexact}
In our next experiment, we discuss issues related to inexact matvecs in HDSA. Recall that $\B{A}$ arises from the discretization of the operator $\mathcal{D}z^\star = \Pi \mathcal K^{-1} \mathcal B$. The computation of a matrix-vector product $\B{Ax}$ can be done as 
\[ \B{Ax} = \B{\Pi}(\B{K}^{-1}(\B{Bx})),\]
where $\B\Pi,\B{K},\B{B}$ are discretized versions of $\Pi,\mathcal K$ and $\mathcal B$ respectively. The application of the transpose of $\B{A}$ can be performed similarly.  It is clear that the dominant cost involves solving linear systems involving $\B{K}$, when $\B{K}$ is large and it is natural to turn to iterative methods such as Conjugate Gradient. In this experiment, we investigate the impact of the tolerance, used as a stopping criterion in the iterative algorithm, on the accuracy of the generalized singular values obtained using the randomized algorithm. To this end, we use operators from the source inversion example in~\cite{hart_vbw_herzog_HDSA} (which were constructed explicitly at significant computational cost for this experiment) and use Conjugate Gradient for solving linear systems involving $\B{K}$. We choose three different tolerances, which controls the relative residual, for the stopping criterion $\text{tol} \in \{10^{-3},10^{-6},10^{-9}\}$. For comparison, we also use a direct solver. In all the experiments we have used an oversampling parameter $p=10$. We see that the absolute error in the singular values is high when the tolerance is $10^{-3}$; however, as the tolerance is decreased, we see that the error is almost indistinguishable from the direct solution. This is consistent with the theory and numerical experiments in \cite{golub2000inexact,ye2011inexact,simoncini2003theory} but more rigorous error analysis is needed to explore the effect of inexactness on the quality of the low-rank approximation. 

\begin{figure}[!ht]
    \centering
    \includegraphics[scale=0.5]{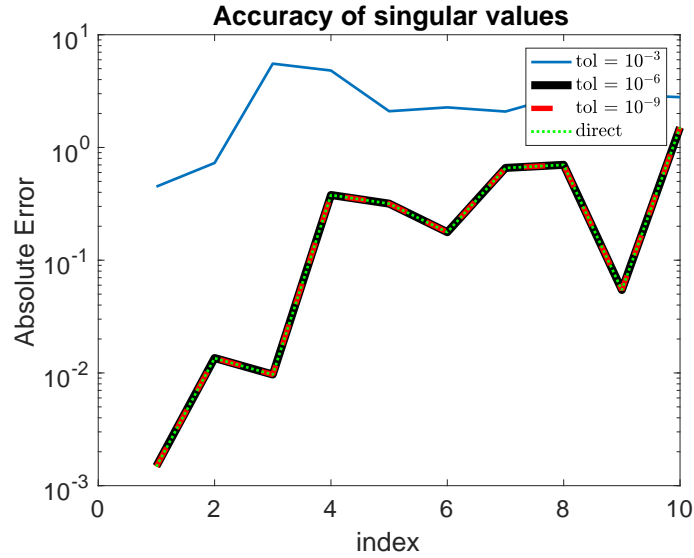}
    \caption{The effect of the tolerance used in the stopping criterion on the absolute error in the singular values. The label `direct' refers to using a direct solver for solving systems with $\B{K}$.  }
    \label{fig:hdsa_tol}
\end{figure}

\subsection{Application of HDSA for PDE-constrained optimal control} \label{ssub:hdsainv}
In this subsection we demonstrate the utility of the randomized GSVD to compute hyper-differential sensitivity indices for a PDE-constrained optimal control problem with spatially heterogeneous uncertain parameters in a model for subsurface fluid flow. Direct computation of such sensitivity indices, in this example, would require 8,800 matvecs, whereas by using the GSVD we reduce this to 96 matvecs. This results in a dramatic computational savings since each matvec requires many PDE solves.
\paragraph{Control Problem:} Consider the PDE-constrained optimal control problem 
\begin{align} 
& \min\limits_{u,z} \frac{1}{2} \sum\limits_{j=1}^r (\mathcal P_j u-T_j)^2 + \frac{\alpha}{2} \sum\limits_{i=1}^m z_i^2  \label{opt_control}  
\end{align}
such that
 \begin{align} -\nabla \cdot (\mu \nabla u) = & \> \sum\limits_{i=1}^m z_i f_i & \text{ in } \Omega \nonumber \nonumber \\
 u= & \>\psi & \text{on } \Gamma_D \nonumber \\
 -\mu \nabla u \cdot \B{n} = & \> 0 & \text{on } \Gamma_N \nonumber
\end{align}
where $\Omega=(0,1)^3$ is the computational domain with Dirichlet boundary $\Gamma_D = \{0\} \times (0,1)^2 \cup \{1\} \times (0,1)^2$ and Neumann boundary $\Gamma_N$ given by the remaining six sides of the unit cube $\Omega$, i.e. $\partial \Omega \setminus \Gamma_D$; $\B{n}$ denotes the outward pointing normal vector to the boundary. We use $(x_1,x_2,x_3) \in \Omega$ to denote spatial coordinates when needed. 

The PDE in \eqref{opt_control} arises from Darcy's law for subsurface porous media flow. It depends upon the spatially heterogeneous permeability field $\mu$ and a source term defined by a sum of $m$ spatially localized injections $f_i$, $i=1,2,\dots,m$, which are weighted by entries of the control vector $\B{z} \in \mathbb R^m$. The optimal control problem seeks to determine injection magnitudes $z_i$, $i=1,2,\dots,m$, so that the pressure field $u$ attains a target value $T_j$, $j=1,2,\dots,r$, at $r$ different spatial coordinates, $\mathcal P_j$ is the point-wise evaluation operator mapping $u$ to its value at the $j^{th}$ spatial coordinate.

Assuming that the permeability field $\mu$ and Dirichlet boundary condition $\psi$ are known, \cref{opt_control} may be solved using PDE-constrained optimization techniques. However, $\mu$ and $\psi$ are typically difficult to estimate in practice which motivates interest in the sensitivity of the optimal controller to perturbations of $\mu$ and $\psi$.

\cref{fig:opt_solution} displays the optimal controller and a comparison of the target pressure with the controlled and uncontrolled state. The left panel displays the optimal injection magnitudes $z_i$, $i=1,2,\dots,m$, with their spatial location given by the position of the circle in the cube $\Omega$ and the injection magnitude indicated by the circle's color. The injections are permitted in sixteen predetermined wells. The right panel compares the target data, uncontrolled state, and controlled state in a $x_1-x_2$ cross section with $x_3=0.53$ fixed. The vertical axis in this figure indicates the pressure. The sixteen cyan circles indicate the location (in the $x_1-x_2$ cross section) of the sixteen injection wells where the controller acts.
\begin{figure}
    \centering
    \includegraphics[width=0.49\textwidth]{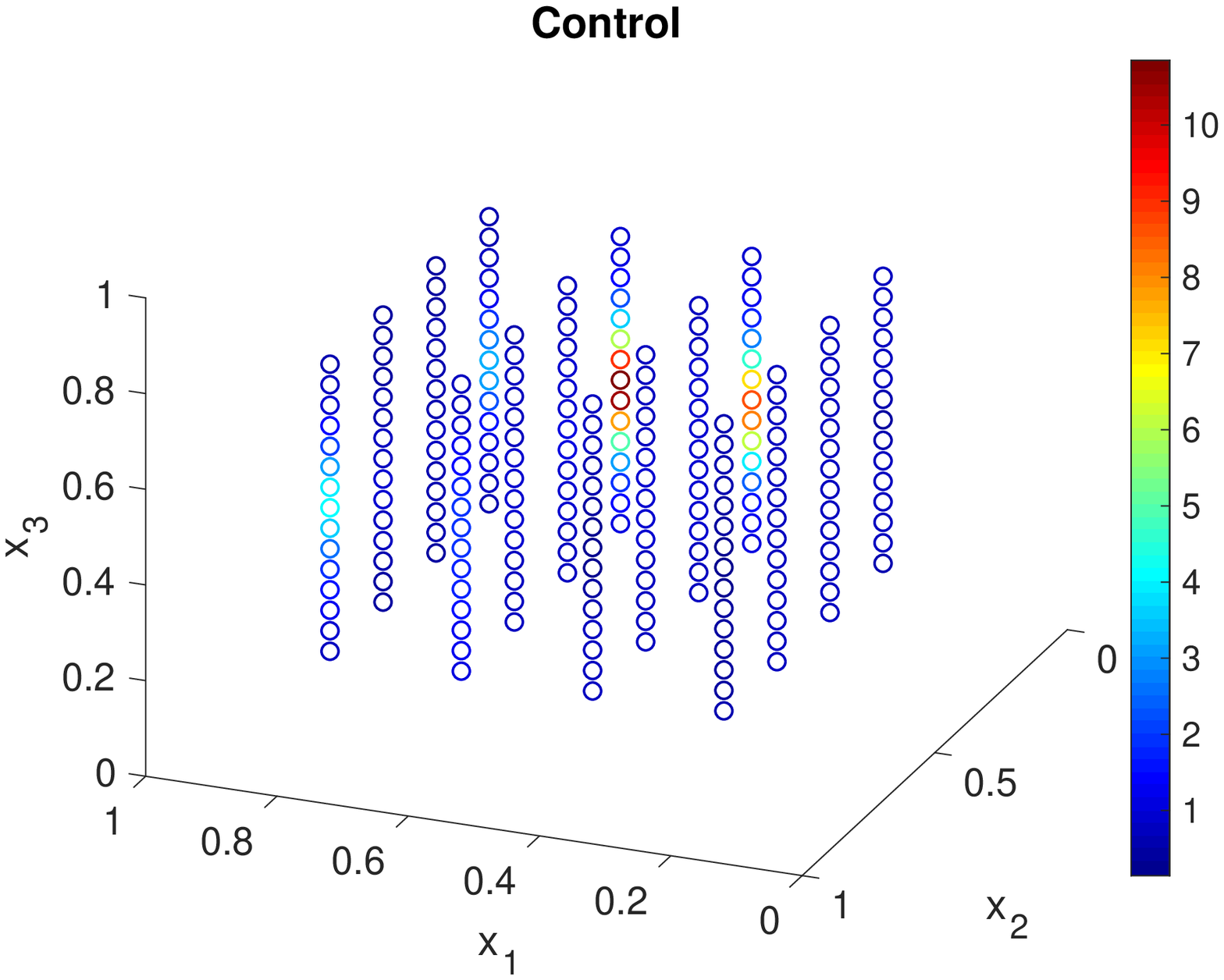}
    \includegraphics[width=0.49\textwidth]{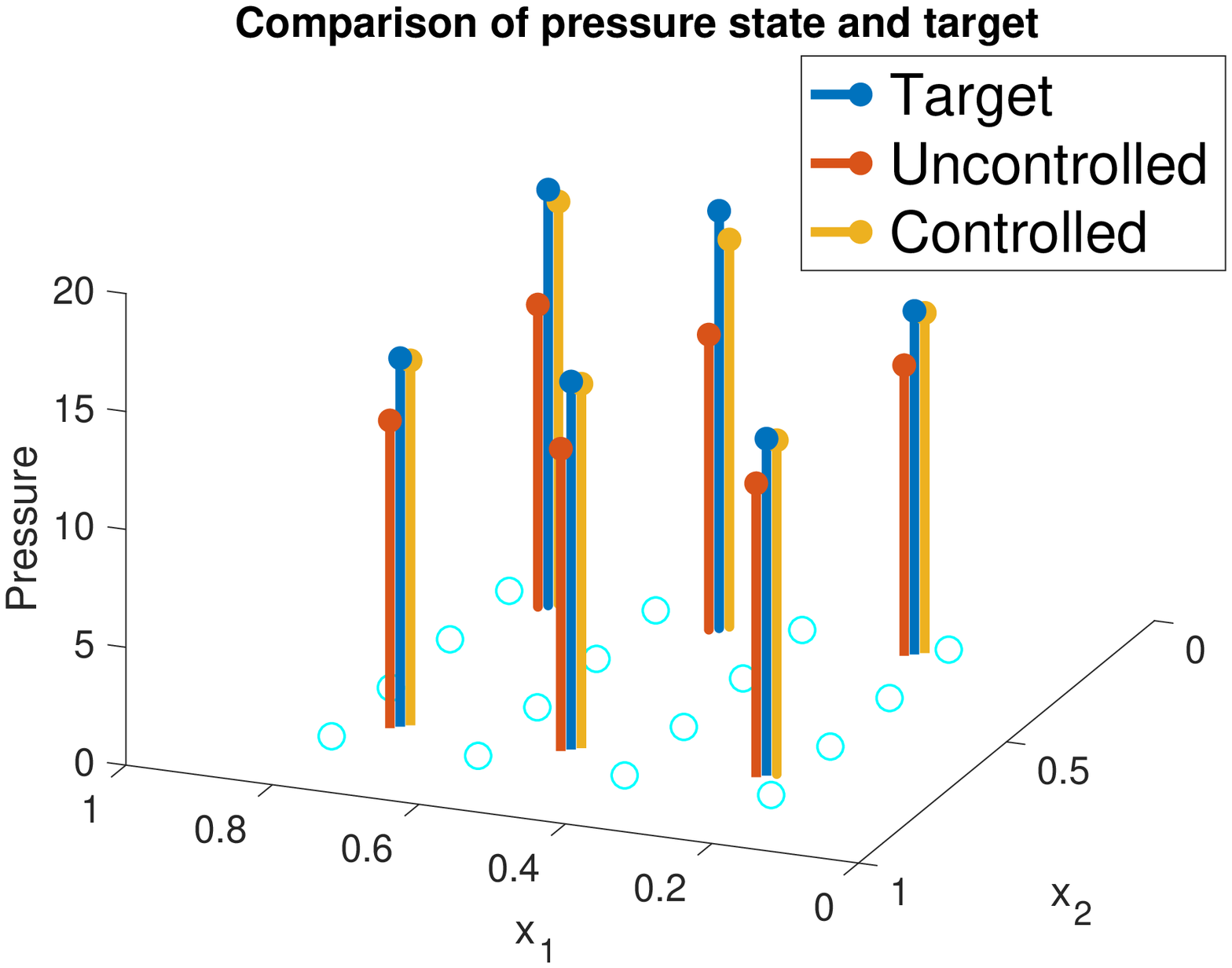}
    \caption{Left: optimal control strategy (injection magnitudes) in sixteen wells; right: comparison of the target data, uncontrolled, and controlled state, the cyan circles indicate the positions of the injection wells (coinciding with the sixteen wells in the left figure).}
    \label{fig:opt_solution}
\end{figure}

\paragraph{Parameter specification:} We represent uncertainty in the permeability field $\mu$, a function defined on $\Omega$, and Dirichlet boundary conditions $\psi_0$ (at $x_1=0$) and $\psi_1$ (at $x_1=1$), functions defined on $\Gamma_D$, by prescribing nominal estimates $\overline{\mu}$, $\overline{\psi_0}$, and $\overline{\psi_1}$, and considering
\begin{align*}
\mu(x_1,x_2,x_3) &= \overline{\mu}(x_1,x_2,x_3)\left(1 + a \sum\limits_{i=1}^{L+1} \sum\limits_{j=1}^{L+1}  \sum\limits_{k=1}^{L+1} \theta_{i,j,k}^{\mu} \phi_i(x_1)\phi_j(x_2)\phi_k(x_3) \right) \\
\psi_0(x_2,x_3) &= \overline{\psi_0}(x_2,x_3)\left(1 + a \sum\limits_{j=1}^{L+1}  \sum\limits_{k=1}^{L+1} \theta_{j,k}^{\psi_0} \phi_j(x_2)\phi_k(x_3) \right) \\
\psi_1(x_2,x_3) &= \overline{\psi_1}(x_2,x_3)\left(1 + a\sum\limits_{j=1}^{L+1}  \sum\limits_{k=1}^{L+1} \theta_{j,k}^{\psi_1} \phi_j(x_2)\phi_k(x_3) \right) \\
\end{align*}
where $a=0.1$ represents the level of uncertainty, $L=19$ is an integer specifying a spatial discretization, and $\phi$ is a linear finite element basis function defined on the interval $[0,1]$ with $L+1$ equally space nodes. Concatenating $\B{\theta^\mu}$, $\B{\theta^{\psi_0}}$, and $\B{\theta^{\psi_1}}$ yields a parameter vector $\B{\theta} \in \mathbb R^{8800}$. The discretization of $\mathcal D z^\star$ \eqref{eqn:dz}, with nominal parameter estimates, i.e. $\B{\theta}=0$, is a $240 \times 8800$ matrix for which we seeks its largest singular values and vectors to estimate sensitivity indices.

\subsubsection{Accuracy of the randomized GSVD algorithms}

The randomized GSVD of~\eqref{eqn:dz} is computed with an oversampling factor of $p=12$ and $q=1$ subspace iteration. The matrix $\B{S}$ arising from the mass matrices defined by inner products of the $f_i$'s and the matrix $\B{T}$ arises from the mass matrices defined by inner products of the $\phi_i$'s. The generalized singular values of~\eqref{eqn:dz} are given in \cref{fig:singular_values}. A low rank structure is identified, where sensitivities in the $8800$ dimensional discretized parameter space can be well approximated with small number of singular modes.

\begin{figure}[!ht]
    \centering
    \includegraphics[width=0.4\textwidth]{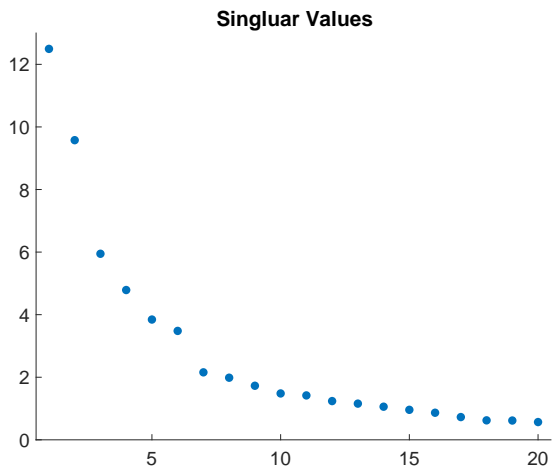}
    \caption{The 20 leading generalized singular values of $\mathcal D z^\star$.}
    \label{fig:singular_values}
\end{figure}

\subsubsection{Estimating sensitivity indices}

\cref{fig:permeability} displays the log of the nominal permeability field $\overline{\mu}$ (left) and the permeability field sensitivity indices (right). In the sensitivity indices panel (right), the size of the dot is scaled by the magnitude of the sensitivity index to aid in visualization of the high sensitivity regions. In particular, we notice a small region of red dots indicating that the high sensitivity is localized to a small region in space.

\begin{figure}[!ht]
    \centering
    \includegraphics[width=0.49\textwidth]{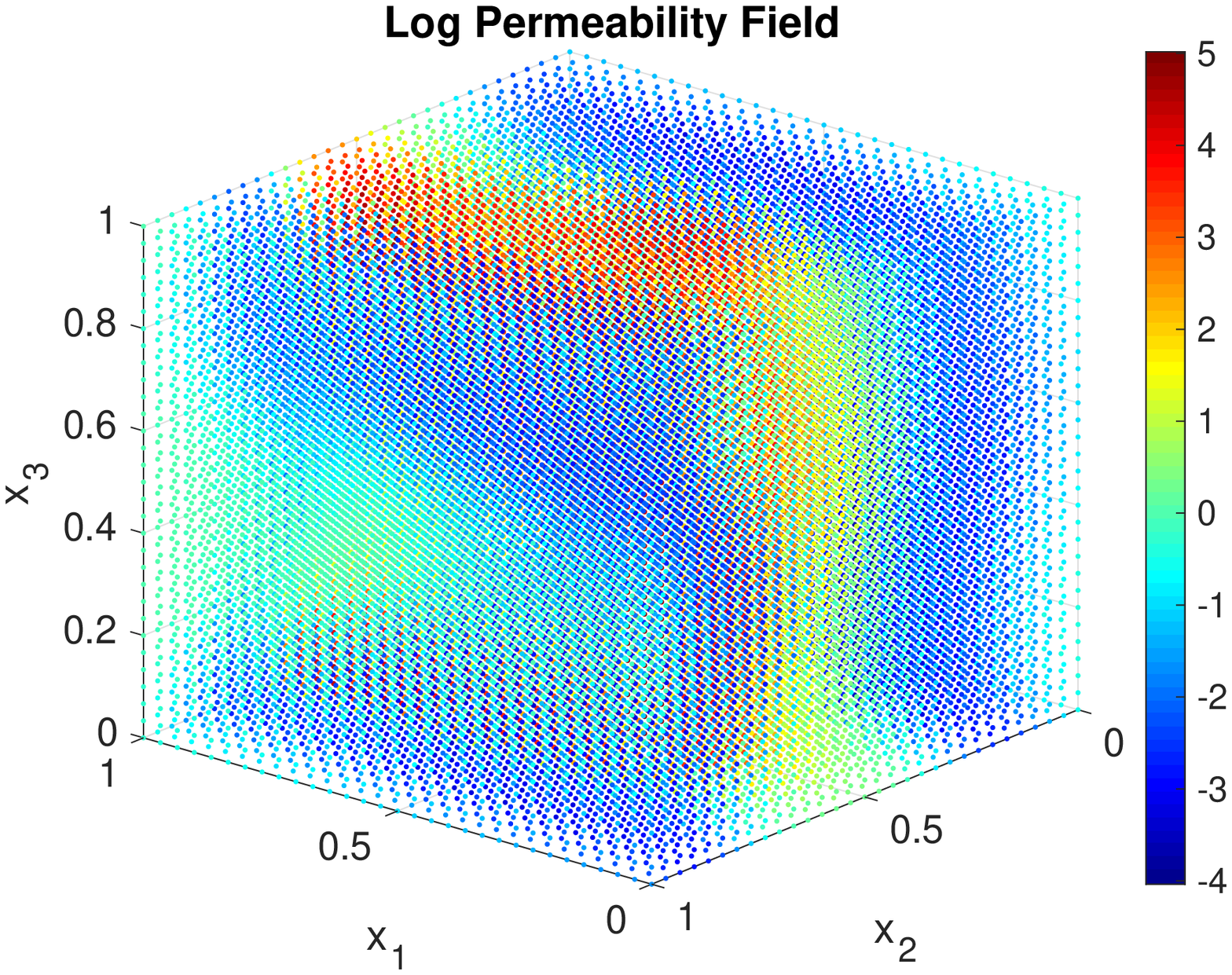}
    \includegraphics[width=0.49\textwidth]{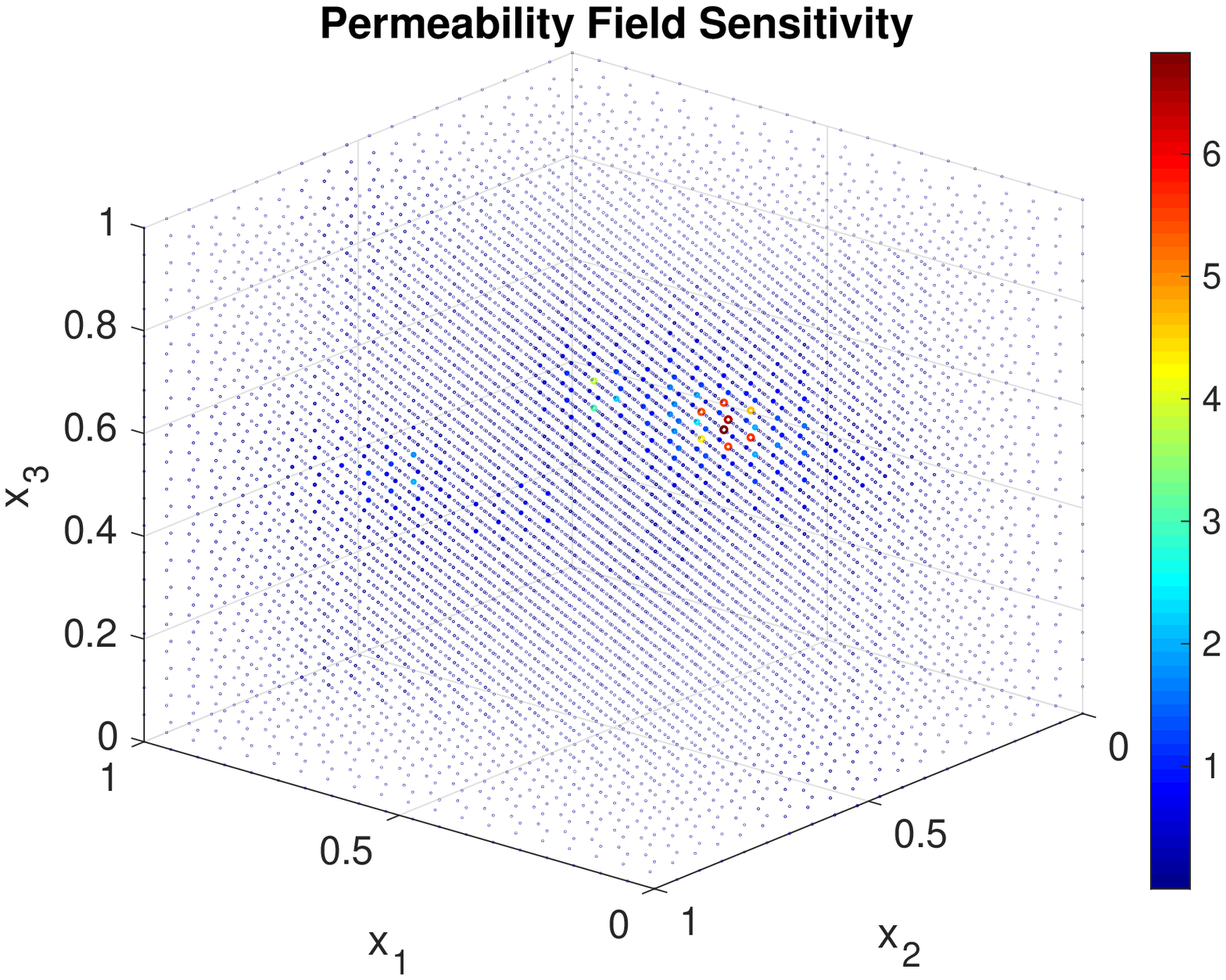}
    \caption{Left: natural log of the nominal permeability field $\overline{\mu}$; right: permeability field sensitivity indices.}
    \label{fig:permeability}
\end{figure}

\cref{fig:BC} displays the boundary condition sensitivity indices corresponding to the $x_1=0$ Dirichlet boundary on the left and $x_1=1$ Dirichlet boundary on the right. We observe spatially localized sensitivity in the boundary conditions. The sensitivity on the $x_1=1$ boundary is an order of magnitude larger than on the $x_1=0$ boundary, while both boundary sensitivities are smaller than the largest permeability field sensitivity indices.

\begin{figure}[!ht]
    \centering
    \includegraphics[width=0.49\textwidth]{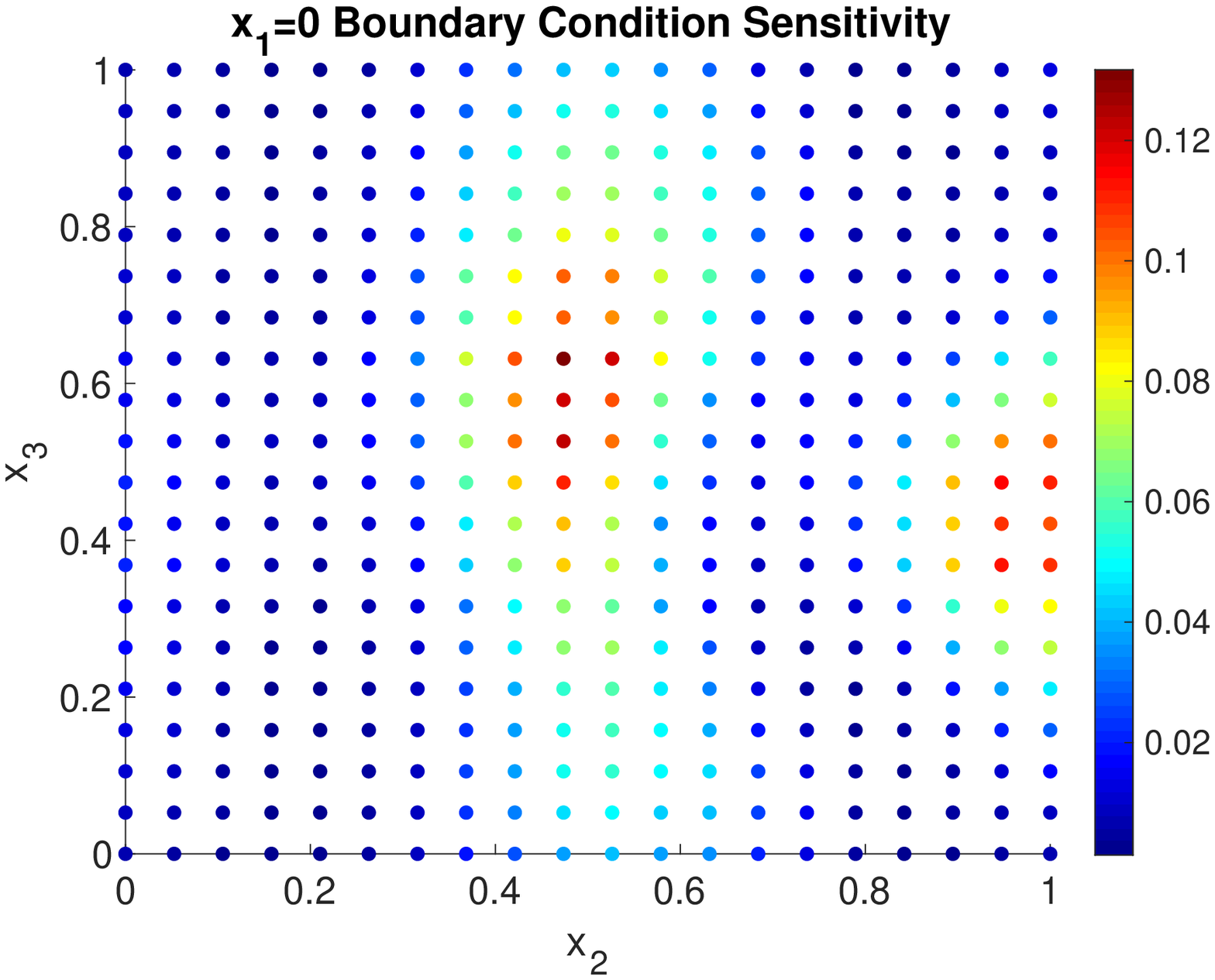}
    \includegraphics[width=0.49\textwidth]{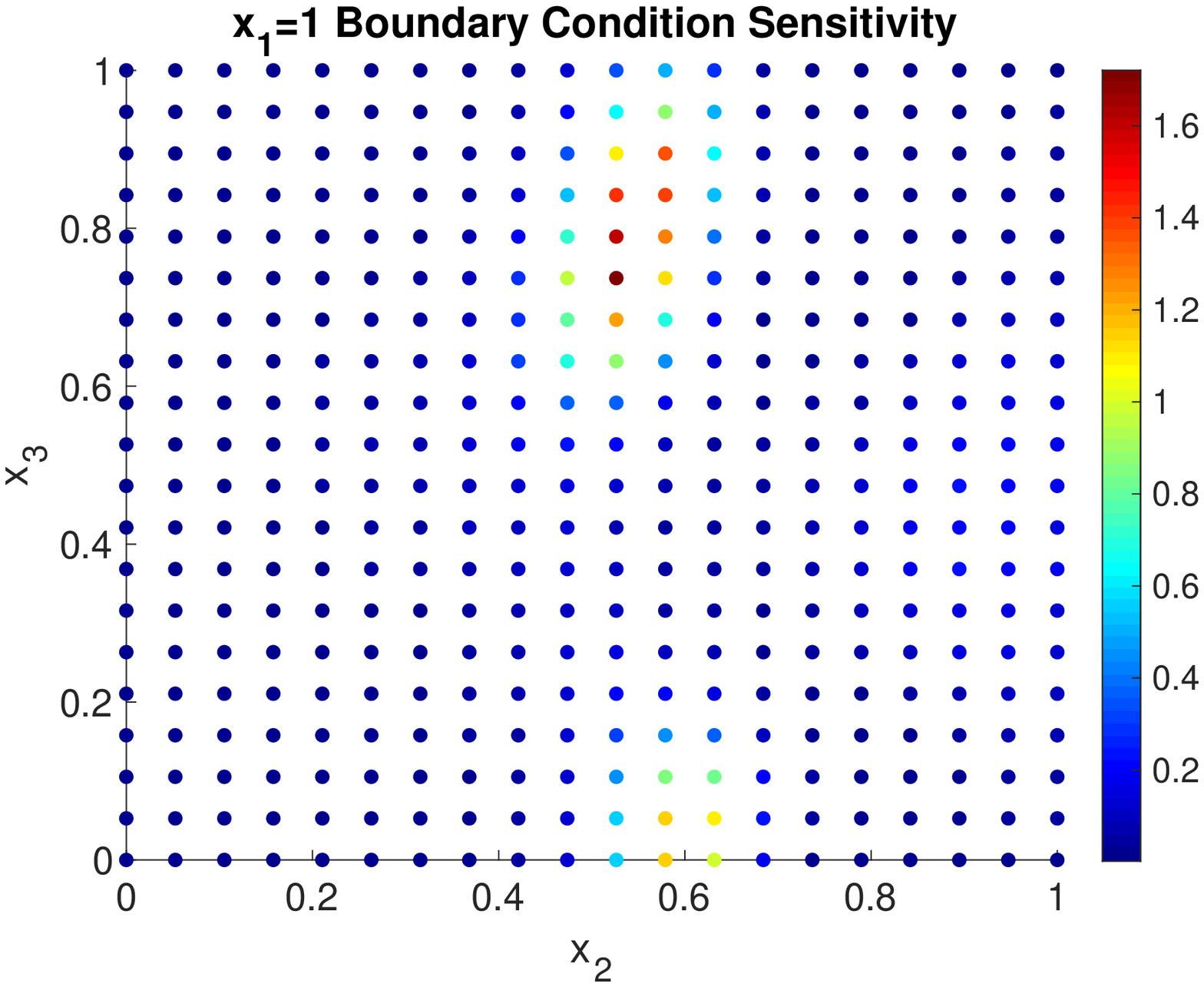}
    \caption{Left: $x_1=0$ Dirichlet boundary condition sensitivity indices; right: $x_1=1$ Dirichlet boundary condition sensitivity indices.}
    \label{fig:BC}
\end{figure}

\section{Conclusions and Future Directions}

There are three major contributions in this paper. First, we present a new algorithm for computing the truncated GSVD using randomized subspace iterations and discussion numerical and computational issues associated with its implementation. Second, we present probabilistic analysis of the error in the low-rank representation when a standard Gaussian random matrix is used as the initial guess. This analysis provides insight into the parameters of the algorithm---the number of subspace iterations, and the oversampling parameter. Third, we present an application of our algorithm to computing the sensitivity indices in the HDSA framework, and demonstrate through numerical experiments that our approach is both accurate and computationally efficient. Based on these experiments we recommend using \cref{alg:randsvd} with $q=1$ subspace iterations. If a preconditioner is available, and solves involving $\B{T}$ are expensive, the error can be further lowered using the approach in \cref{ssec:precond}.

Computing the truncated GSVD with randomized algorithms facilitates efficient analysis of large linear operators arising in a variety of contexts. This article presented hyper-differential sensitivity analysis where the use of a truncated GSVD has potential to yield a significant reduction in the number of matvecs. Further, the randomized algorithms enables efficient distributed memory parallelism which facilitates computation for large-scale applications.

There are several possible avenues for future exploration. First, the analysis in \cref{sec:analysis} centered on the error in the low-rank approximation. Following the work in \cite{saibaba2019randomized}, one can analyze the accuracy in the singular values and the singular vectors. Second, the algorithms proposed in this paper use standard Gaussian random matrices for the initial guess $\B\Omega$. Several other choices were proposed in this paper but we did not explore them numerically or analytically. It would be interesting to investigate if the dependence of the error condition number $\kappa_2(\B{T})$ can be further weakened, either by using and developing new preconditioners, or using a different strategy. 

In hyper-differential sensitivity analysis, the matrix $\B{A}$ was defined implicitly and matvecs involving $\B{A}$ and its transpose require solving linear systems. If an iterative solver is used, and the iterations are stopped early, then this causes an error in the matvecs. An analysis of the error in the GSVD and developing appropriate stopping criteria would also be interesting to investigate. Similarly, analyzing inexactness from the inconsistency in $\B{B}$ and its transpose is also worth investigating. In addition, future research may explore parallel load balancing associated with computing matvecs in parallel, and the potential to use information from solves in the first stage of the algorithm to precondition solves in the second stage.

\section*{Acknowledgments}
The work of A.K.S. was supported, in part, by the National Science Foundation through the awards DMS-1821149 and DMS-1745654. This paper describes objective technical
results and analysis. Any subjective views or opinions that might be
expressed in the paper do not necessarily represent the views of the
U.S. Department of Energy or the United States Government. Sandia
National Laboratories is a multimission laboratory managed and
operated by National Technology and Engineering Solutions of Sandia
LLC, a wholly owned subsidiary of Honeywell International, Inc., for
the U.S. Department of Energy's National Nuclear Security
Administration under contract DE-NA-0003525. SAND2020-1576 J.

\bibliography{GSVD}

\begin{thebibliography}{10}

\bibitem{alter2003generalized}
O.~Alter, P.~O. Brown, and D.~Botstein.
\newblock Generalized singular value decomposition for comparative analysis of
  genome-scale expression data sets of two different organisms.
\newblock {\em Proceedings of the National Academy of Sciences},
  100(6):3351--3356, 2003.

\bibitem{bjorck2015numerical}
{\AA}.~Bj{\"o}rck.
\newblock {\em Numerical methods in matrix computations}, volume~59.
\newblock Springer, 2015.

\bibitem{davis2011university}
T.~A. Davis and Y.~Hu.
\newblock The {U}niversity of {F}lorida sparse matrix collection.
\newblock {\em ACM Transactions on Mathematical Software (TOMS)}, 38(1):1,
  2011.

\bibitem{drmac2018discrete}
Z.~Drmac and A.~K. Saibaba.
\newblock The discrete empirical interpolation method: {C}anonical structure
  and formulation in weighted inner product spaces.
\newblock {\em SIAM Journal on Matrix Analysis and Applications},
  39(3):1152--1180, 2018.

\bibitem{golub2000inexact}
G.~H. Golub and Q.~Ye.
\newblock Inexact inverse iteration for generalized eigenvalue problems.
\newblock {\em BIT Numerical Mathematics}, 40(4):671--684, 2000.

\bibitem{gu2015subspace}
M.~Gu.
\newblock Subspace iteration randomization and singular value problems.
\newblock {\em SIAM Journal on Scientific Computing}, 37(3):A1139--A1173, 2015.

\bibitem{halko2011finding}
N.~Halko, P.-G. Martinsson, and J.~A. Tropp.
\newblock Finding structure with randomness: {P}robabilistic algorithms for
  constructing approximate matrix decompositions.
\newblock {\em SIAM review}, 53(2):217--288, 2011.

\bibitem{hansen2005rank}
P.~C. Hansen.
\newblock {\em Rank-deficient and discrete ill-posed problems: numerical
  aspects of linear inversion}, volume~4.
\newblock Society for Industrial and Applied Mathematics, 2005.

\bibitem{hart_vbw_herzog_HDSA}
J.~Hart, B.~v.~B. Waanders, and R.~Herzog.
\newblock Hyper-differential sensitivity analysis of uncertain parameters in
  {PDE}-constrained optimization.
\newblock {\em Accepted in the International Journal for Uncertainty
  Quantification. arXiv:1909.07336}, 2020.

\bibitem{horn1990matrix}
R.~A. Horn and C.~R. Johnson.
\newblock {\em Matrix analysis}.
\newblock Cambridge university press, 2012.

\bibitem{knyazev2002principal}
A.~V. Knyazev and M.~E. Argentati.
\newblock Principal angles between subspaces in an {A}-based scalar product:
  algorithms and perturbation estimates.
\newblock {\em SIAM Journal on Scientific Computing}, 23(6):2008--2040, 2002.

\bibitem{larsen1998lanczos}
R.~M. Larsen.
\newblock Lanczos bidiagonalization with partial reorthogonalization.
\newblock {\em DAIMI Report Series}, (537), 1998.

\bibitem{lowery2014stability}
B.~R. Lowery and J.~Langou.
\newblock Stability analysis of {QR} factorization in an oblique inner product.
\newblock {\em arXiv preprint arXiv:1401.5171}, 2014.

\bibitem{ponnapalli2011higher}
S.~P. Ponnapalli, M.~A. Saunders, C.~F. Van~Loan, and O.~Alter.
\newblock A higher-order generalized singular value decomposition for
  comparison of global {mRNA} expression from multiple organisms.
\newblock {\em PloS one}, 6(12):e28072, 2011.

\bibitem{saad2003iterative}
Y.~Saad.
\newblock {\em Iterative methods for sparse linear systems}, volume~82.
\newblock Society for Industrial and Applied Mathematics, 2003.

\bibitem{saibaba2019randomized}
A.~K. Saibaba.
\newblock Randomized subspace iteration: {A}nalysis of canonical angles and
  unitarily invariant norms.
\newblock {\em SIAM Journal on Matrix Analysis and Applications}, 40(1):23--48,
  2019.

\bibitem{saibaba2015fast}
A.~K. Saibaba and P.~K. Kitanidis.
\newblock Fast computation of uncertainty quantification measures in the
  geostatistical approach to solve inverse problems.
\newblock {\em Advances in water resources}, 82:124--138, 2015.

\bibitem{saibaba2016randomized}
A.~K. Saibaba, J.~Lee, and P.~K. Kitanidis.
\newblock Randomized algorithms for generalized {H}ermitian eigenvalue problems
  with application to computing {K}arhunen--{L}o{\`e}ve expansion.
\newblock {\em Numerical Linear Algebra with Applications}, 23(2):314--339,
  2016.

\bibitem{simoncini2003theory}
V.~Simoncini and D.~B. Szyld.
\newblock Theory of inexact {K}rylov subspace methods and applications to
  scientific computing.
\newblock {\em SIAM Journal on Scientific Computing}, 25(2):454--477, 2003.

\bibitem{tropp2017practical}
J.~A. Tropp, A.~Yurtsever, M.~Udell, and V.~Cevher.
\newblock Practical sketching algorithms for low-rank matrix approximation.
\newblock {\em SIAM Journal on Matrix Analysis and Applications},
  38(4):1454--1485, 2017.

\bibitem{van1976generalizing}
C.~F. Van~Loan.
\newblock Generalizing the singular value decomposition.
\newblock {\em SIAM Journal on Numerical Analysis}, 13(1):76--83, 1976.

\bibitem{vatankhah2018total}
S.~Vatankhah, R.~A. Renaut, and V.~E. Ardestani.
\newblock Total variation regularization of the 3-{D} gravity inverse problem
  using a randomized generalized singular value decomposition.
\newblock {\em Geophysical Journal International}, 213(1):695--705, 2018.

\bibitem{vershynin2012introduction}
R.~Vershynin.
\newblock {\em Introduction to the non-asymptotic analysis of random matrices},
  pages 210--268.
\newblock Cambridge University Press, 2012.

\bibitem{xiang2015randomized}
H.~Xiang and J.~Zou.
\newblock Randomized algorithms for large-scale inverse problems with general
  {T}ikhonov regularizations.
\newblock {\em Inverse Problems}, 31(8):085008, 2015.

\bibitem{ye2011inexact}
Q.~Ye and P.~Zhang.
\newblock Inexact inverse subspace iteration for generalized eigenvalue
  problems.
\newblock {\em Linear Algebra and its Applications}, 434(7):1697--1715, 2011.

\bibitem{zhu2013angles}
P.~Zhu and A.~V. Knyazev.
\newblock Angles between subspaces and their tangents.
\newblock {\em Journal of Numerical Mathematics}, 21(4):325--340, 2013.

\end{thebibliography}

\end{document}